\newcommand{\supp}{\mathop{\rm supp}\nolimits}
\newcommand{\lip}{\mathop{\rm Lip}\nolimits}
\theoremstyle{plain}
\newtheorem{thm}{Theorem}
\newtheorem{lm}{Lemma}
\newtheorem*{prop}{Proposition}
\theoremstyle{definition}
\newtheorem*{defin}{Definition}
\theoremstyle{remark}
\newtheorem*{rem}{Remark}
\newcommand{\footremember}[2]{%
    \footnote{#2}
    \newcounter{#1}
    \setcounter{#1}{\value{footnote}}%
}
\newcommand{\footrecall}[1]{%
    \footnotemark[\value{#1}]%
} 
\title{On displacement of viscous liquid in a system of parallel tubes}
\author{
Grigorii V. Monakov 
\footremember{SPbSU}{Saint-Petersburg State University, 7/9 Universitetskaya nab., St. Petersburg,199034, Russia}
\footremember{1}{email: st049008@student.spbu.ru}
\and 
%Roman V. Romanov 
%\footrecall{SPbSU} 
%\footremember{2}{email: morovom@gmail.com}
%\and 
Sergey B. Tikhomirov
\footrecall{SPbSU} 
\footremember{3}{email: s.tikhomirov@spbu.ru}
\footremember{4}{Corresponding Author}
\and 
Andrey A. Yakovlev
\footremember{NTC}{Gazpromneft Science \& Technology Centre, 75-79 liter D Moika River emb., St Petersburg 190000}
\footremember{5}{email: yakovlev.aale@gazpromneft-ntc.ru}
}
\begin{document}

\maketitle

\begin{abstract}
An explicitly solvable quasi 1D model of oil displacement is studied. The problem of recovering of the reservoir geometry is solved by means of a fixed point algorithm. The stability of solution is studied in various functional classes.
\end{abstract}

\section{Introduction}

The process of secondary oil recovery by pumping a fluid which displaces oil is known to be difficult to describe rigorously \cite{Barenbladt, bear}. In this note we consider a simple explicitly solvable quasi-1D model. Within this model the system is represented by a set of noninteracting tubes which can differ by their length and cross-section. The latter may be thought of as a way to take into account the number of tubes of the same length.
The left end of all tubes is identified with the pumping well and the right end -- with the production well. Each tube is separated into two segments filled with the corresponding immiscible phase (water or oil). The pressure difference between the wells is the parameter in the problem. The tubes are assumed to be homogeneous so that the pressure in each tube is linear with respect to the length. 

The problem studied in the paper is an inverse one -- given the amount of oil extracted at the producing well as a function of the amount of water pumped in, find the geometry of the the reservoir, that is, determine the lengths and cross-sections of the tubes. Questions of this kind are known as history matching in the context of petroleum engineering \cite{guptaking}. 

The model we consider comes from the Dykstra--Parsons model \cite{Dykstra}. A partial case of the model we study is studied in \cite[Part 4]{guptaking} corresponding to the pressure difference constant in time.  Notice that the recovery problem appears to have not been analyzed previously. Note that representation of the porous media as a complex system of tubes are quite often used in the literature, see for instance derivation of capillarity pressure in Buckley-Leverett model \cite{BL} and relative permeability calculation \cite{Stone1970, Stone1973}.

The solution of the problem above naturally splits into the questions of uniqueness, stability (well-posedness) and that of finding an actual recovery procedure. There is an obvious scaling non-uniqueness in  problem (see section 4), hence it remains to study whether the solution is unique after fixing the scaling parameter. Our main results are theorems 2 and 4. Theorem 2 establishes the uniqueness of solution upon  choosing the scaling. The proof of it is based on a fixed point theorem. The geometry of the reservoir is then recovered via finding the fixed point of an explicitly given operator, hence theorem 2 also provides a recovery procedure. Theorem 4 establishes the stability with respect to small perturbations in appropriate functional classes for the above problem. This assertion is rather important since the precision of debits' measurements on pumping and producing wells is notoriously low, so only a stable recovery procedure is valuable.

The structure of the paper is as follows. In section 2 we formulate the model for one tube and solve the basic equation for the phase boundary. In section 3 we calculate the water and oil debits as functions of time for a finite system of tubes, respectively. In section 4 we describe the continuum limit $ n \to \infty $ ($n $ being the number of tubes) of the model and explain the scaling non-uniqueness. Section 5 contains the formulation and proof of the main uniqueness and recovery result. Section 6 is devoted to study of the stability of the problem.

\section{Description of model with one tube}

Consider two wells connected with one thin tube of length $L$ and cross-section $S$. At the moment $t = 0$ this tube is filled with oil of viscosity $\mu_o$. Then we start to pump water of viscosity $\mu_w$ into the pumping well, located at $ x=0 $. The pressure difference between two wells is a given function of time, to be denoted $\Delta p(t)$. Since we think that all liquids in our problem are incompressible, at the very first moment oil and after a while water will start to flow out from second well. Let's denote oil flow rate with $Q_o(t)$ and water flow rate with $Q_w(t)$. Under assumption that our tube is thin enough ($L^2 \gg S$) we think that all characteristics only depend on $x$ coordinate, which means that we have only one point in our tube, where oil contacts water (lets denote distance between the first well and this point by $l(t)$) and pressure $p(t, x)$ and speed $v(t, x)$ of liquid in the tube depend only on one coordinate $x$ and time $t$. Since our flow satisfies continuity equation, it's easy to see that $v(t, x_1) = v(t, x_2)$ for all $x_1, x_2 \in [0, L]$, which means that flow speed $v(t)$ depends only on time $t$, and $\dfrac{dl}{dt}(t) = v(t)$. The Darcy's law for this model has the form,

\begin{gather*}
v(t) = -\frac{k}{\mu_w} \cdot \frac{\partial p}{\partial x}(t, x)
\end{gather*}

Here $k$ is a positive constant (permeability). This equation holds for all $x \le l(t)$. Consider now $x > l(t)$. Since we have a different liquid here (oil instead of water) the equation will have another viscosity in the right part:

\begin{gather*}
v(t) = -\frac{k}{\mu_o} \cdot \frac{\partial p}{\partial x}(t, x)
\end{gather*}

Integrating the first equation from $x = 0$ to $x = l(t)$ and second equation from $x = l(t)$ to $x = L$ and summing up the results with coefficients $\mu_w$ and $\mu_o$ respectively we get:

\begin{gather*}
v(t) \cdot (l(t) \cdot \mu_w + (L - l(t)) \cdot \mu_o) = -k (p(t, l(t)) - p(t, 0)) - k (p(t, L) - p(t, l(t)))
\end{gather*}

Since $p(t, 0) - p(t, L) = \Delta p(t)$ this can be written in the form,

\begin{gather*}
\frac{dl}{dt}(t) = \frac{k \cdot \Delta p (t)}{l(t) \cdot \mu_w + (L - l(t)) \cdot \mu_o}
\end{gather*}

For brevity let us denote $\kappa = \dfrac{\mu_w}{\mu_o} < 1$ and $c(t) = \dfrac {k}{\mu_o} \cdot \Delta p(t)$. Now we have an ordinary differential equation which can be solved:

\begin{gather}
\label{tubes1_2}
\frac{dl}{dt}(t) = \frac{c(t)}{l(t) \cdot \kappa + (L - l(t))}
\end{gather}

\begin{gather*}
\frac{\kappa - 1}{2} \cdot l(t)^2 + L \cdot l(t) + c_1 = \int\limits_0^t c(\tau) \ d\tau 
\end{gather*}

Since $l(0) = 0$ we get $c_1 = 0$. Another useful notation will be $F(t) = \int_0^t c(\tau) \ d\tau$. Now let's solve this quadratic equation. Since we know that $l(t) \le L$ there is only one possible root:

\begin{equation}
\label{tubes1}
l(t) = \frac{L - \sqrt{L^2 - 2 (1 - \kappa) \cdot F(t)}}{1 - \kappa}
\end{equation}

We will also need some information about $F(t^*)$, where $t^* = \sup \{t \in [0, +\infty) : l(t) < L\}$:

\begin{gather}
\notag 
L = \frac{L - \sqrt{L^2 - 2(1-\kappa)F(t^*)}}{1 - \kappa}\\
\notag 
\kappa^2 L^2 = L^2 - 2(1-\kappa)F(t^*)\\
%\end{gather*}
%\begin{gather}
\label{tubes1_1}
F(t^*) = \frac{1+\kappa}{2} \cdot L^2
\end{gather}

\section{Description and properties of model with many tubes}

Let us now consider the system of $n$ parallel thin tubes with lengths $L_1, L_2, \dots, L_n$ and respective cross-sections $S_1, S_2, \dots, S_n$. At the moment $t = 0$ all tubes are filled with oil, and then, as in our previous model, we start to pump water into the first well. Let us denote the volume of oil and water extracted in first $t$ seconds by $\tilde V_o(t)$ and $\tilde V_w(t)$, respectively. Then we have an obvious connection between $\tilde V_o(t)$, $\tilde V_w(t)$ and $Q_o(t)$, $Q_w(t)$:

\begin{gather*}
\tilde V_o(t) = \int\limits_0^t Q_o(\tau) d\tau, \quad \quad \tilde V_w(t) = \int\limits_0^t Q_w(\tau) d\tau
\end{gather*}

It is clear that we can also calculate the amount of water at the moment $t$ using the following formula:

\begin{gather*}
\tilde V_o(t) = l_1(t) \cdot S_1 + l_2(t) \cdot S_2 + ... + l_n(t) \cdot S_n
\end{gather*}

Suppose that $L_1 < L_2 < \dots < L_n$. Let's denote with $t_k$ the moment when $k$-th tube starts leaking water, or more precisely $t_k = \sup \{t \in [0, +\infty):l_k(t) \le L_k\}$. According to formula $\eqref{tubes1}$ we can see that $l_k(t)$ depends only on one parameter of the tube --- length $L_k$, and then formula $\eqref{tubes1_1}$, inequality $L_1 < L_2 < \dots < L_n$ and monotonicity of the function $F$ imply $t_1 < t_2 < \dots < t_n$. Then on the segment $[t_k, t_{k + 1})$ the following formula holds,
\begin{gather*}
\tilde V_o(t)  - \tilde V_o(t_k) = \sum_{j=k+1}^n (l_j(t) - l_j(t_k)) \cdot S_j ,
\end{gather*}
since tubes with numbers in $\{1, 2, \dots, k\}$ are already filled with water and do not contribute in the oil production anymore. Let us substitute formula $\eqref{tubes1}$ into the last equation:

\begin{gather*}
\tilde V_o(t) - \tilde V_o(t_k) = \sum_{j=k+1}^n \left(\frac {L_j - \sqrt{L_j^2 - 2(1-\kappa) \cdot F(t)}} {1 - \kappa} - \frac {L_j - \sqrt{L_j^2 - 2(1-\kappa) \cdot F(t_k)}} {1 - \kappa}\right) \cdot S_j = \\ = \sum_{j=k+1}^n \frac {\sqrt{L_j^2 - 2(1-\kappa) \cdot F(t_k)} - \sqrt{L_j^2 - 2(1-\kappa) \cdot F(t)}} {1 - \kappa} \cdot S_j
\end{gather*}

We can get a similar formula for the water flow rate (note, that when $t_k \le t \le t_{k+1}$ holds, water goes only through tubes with numbers $\{1, 2, \dots, k\}$):

\begin{gather*}
Q_w(t) = \sum_{j=1}^k \frac{dl_j}{dt}(t) \cdot S_j
\end{gather*}

And according to $\eqref{tubes1_2}$, since $l_j(t) = L_j$ for $t_k \le t \le t_{k+1}$ and $1 \le j \le k$ we get:

\begin{gather*}
Q_w(t) = \sum_{j=1}^k \frac{c(t)} {L_j \cdot \kappa} \cdot S_j = \frac{c(t)} {\kappa} \sum_{j=1}^k \frac{S_j} {L_j}\\
\tilde V_w(t) - \tilde V_w(t_k) = \frac{F(t) - F(t_k)} {\kappa} \sum_{j=1}^k \frac{S_j} {L_j},\ \text{where $t_k \le t \le t_{k+1}$}
\end{gather*}

Returning to formula $\eqref{tubes1_1}$ and substituting $t^* = t_k$ we get:

\begin{gather*}
F(t_k) = \frac{1+\kappa}{2} \cdot L_k^2
\end{gather*}

To sum up, in this section we got two formulas (for $t_k \le t \le t_{k+1}$) we will use below:

\begin{gather}
\label{tubes2}
\tilde V_o(t)  - \tilde V_o(t_k) = \sum_{j=k+1}^n \frac {\sqrt{L_j^2 - 2(1-\kappa) \cdot \frac{1+\kappa}{2} \cdot L_k^2} - \sqrt{L_j^2 - 2(1-\kappa) \cdot F(t)}} {1 - \kappa} \cdot S_j\\
\label{tubes3}
\tilde V_w(t) - \tilde V_w(t_k) = \frac{F(t) - \frac{1+\kappa}{2} \cdot L_k^2} {\kappa} \sum_{j=1}^k \frac{S_j} {L_j}
\end{gather}

Note, that we can exclude $F(t)$ from this equations and get a parametric equality, which allows us to think of a curve $(\tilde V_w(t), \tilde V_o(t))$, which only depends on the sets $L_1, L_2, \dots, L_n$ and $S_1, S_2, \dots, S_n$, but not on $F(t)$. This means that our curve does not change if we take any monotonic smooth reparameterization $\alpha : [0, +\infty) \to [0, +\infty)$.

\begin{rem}
Despite of simplicity the model mimics such a complicated phenomenon as viscous fingerings \cite{ST}. Assume that we have a family of tubes of similar, but slightly different length. The speed of water propagation will be higher in short tubes due to less amount of liquid in them. With further propagation of water, the average viscosity in short tubes becomes smaller comparing to average viscosity in long tubes and hence the difference in speed between short and long tubes increases. Such a process corresponds to growth of viscous fingers. At the same time the model cannot cover other phenomena such as subdivision of fingers, changing topology of water pond etc. 
\end{rem}

\section{Continuous limit}

We would now like to pass to the limit of infinite system of tubes to model a continuous environment. To reach this let's consider a measure $\mu$ with bounded support in $(0, +\infty)$. We will think of physical meaning of this measure in a following way -- the measure of a line subset $A$ is equal to a sum of cross-sectional areas of all tubes with lengths in $A$. Then our previous model with $n$ tubes can be presented as measure $\mu = \sum_{k=1}^n S_k \delta_{L_k}$, where 
$\delta_L(A) = \begin{cases}
1,&\text{if $L \in A$;}\\
0,&\text{if $L \notin A$.}\\
\end{cases}$

We want to understand formulae (\ref{tubes2}) and (\ref{tubes3}) as refereing to a discrete measure $ \mu $ just described and extend them by continuity to all measures. More precisely, we want maps which takes a measure $\mu$ into two functions, $ V_w $ and $ V_o $, so that in the case of measure $ \mu $ of the form $\mu = \sum\limits_{k=1}^n S_k \cdot \delta_{L_k}$ the resulting functions $ V_w $ and $ V_o $ are a reparametrization of (\ref{tubes2}) and (\ref{tubes3}), that is, there exists a continuous bijection $ \xi $, such that $ \tilde V_o ( \xi (t )) = V_o ( t ) $, $ \tilde V_w ( \xi (t )) = V_w( t ) $. We would like the maps to be continuous from the space of measures endowed with the weak-$ * $ topology into the space of continuous functions with the standard metric.  

\begin{lm}
The maps 
\[ \mu \mapsto V_w , \mu \mapsto V_o \]
defined by the formulae
\begin{gather}
\label{def_w}
V_w(\alpha) = \frac{1 + \kappa}{\kappa}\int\limits_0^{\alpha} t \cdot \int\limits_0^t \frac {1}{y}\ d\mu(y)\ dt , \\
\label{def_o}
V_o(\alpha) = (1 + \kappa) \int\limits_{0}^{\alpha} t \cdot \int\limits_{t}^{\infty} \frac{1} {\sqrt{y^2 - (1 - \kappa^2)t^2}}\ d\mu(y)\ dt
\end{gather}
satisfy 
\[ 
V_w \left( \sqrt { \frac 2{1+k} F ( t ) } \right) = \tilde V_w ( t ) , 
V_o \left( \sqrt { \frac 2{1+k} F ( t ) } \right) = \tilde V_o ( t ) , 
\] 
for $\mu = \sum\limits_{k=1}^n S_k \cdot \delta_{L_k}$, are continuous from the space of measures endowed with the weak-$*$ topology into $ C ( 0 , M ) $ for any $ M > 0 $.  
\end{lm}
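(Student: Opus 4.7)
The plan is to split the proof into verifying the reparametrization identity for discrete measures and then establishing the weak-$*$ continuity of the two maps. Under the substitution $\alpha = \sqrt{2 F(t)/(1+\kappa)}$, the jump times $t_k$ correspond to $\alpha = L_k$ by \eqref{tubes1_1}. For $\mu = \sum_{k=1}^n S_k \delta_{L_k}$, a Fubini swap in \eqref{def_w} and \eqref{def_o} (using the antiderivative $-\tfrac{1}{1-\kappa^2}\sqrt{y^2-(1-\kappa^2)t^2}$ for the inner integral in $V_o$) yields
\begin{gather*}
V_w(\alpha) = \frac{1+\kappa}{2\kappa}\int_0^\alpha \frac{\alpha^2-y^2}{y}\,d\mu(y), \\
V_o(\alpha) = \frac{1}{1-\kappa}\int_0^\infty \Bigl(y - \sqrt{y^2 - (1-\kappa^2)\min(\alpha,y)^2}\Bigr)\,d\mu(y).
\end{gather*}
Evaluating at $\alpha \in [L_k, L_{k+1}]$ and subtracting the value at $\alpha = L_k$ exactly reproduces \eqref{tubes3} and \eqref{tubes2}; an easy induction on $k$ with $V_{w/o}(0)=0=\tilde V_{w/o}(0)$ closes this step.

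For the continuity part, I would read off from the same rewriting the pairings $V_w(\alpha) = \int \phi^w_\alpha(y)\,d\mu(y)$ and $V_o(\alpha) = \int \phi^o_\alpha(y)\,d\mu(y)$ with
\[ \phi^w_\alpha(y) = \frac{1+\kappa}{2\kappa}\cdot\frac{(\alpha^2-y^2)_+}{y}, \qquad \phi^o_\alpha(y) = \frac{y - \sqrt{y^2-(1-\kappa^2)\min(\alpha,y)^2}}{1-\kappa}. \]
Both are jointly continuous in $(\alpha,y)$ on $[0,M] \times (0,+\infty)$: at the gluing line $\alpha = y$ this is checked by direct limit evaluation (for $\phi^o$ both branches give $\alpha/(1+\kappa)$; $\phi^w$ meets zero smoothly), and away from it the expressions are smooth. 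Given $\mu_n \rightharpoonup^* \mu$, all supports lie in a common compact $K=[a,b]\subset(0,+\infty)$, and Banach--Steinhaus applied in $C(K)^*$ gives $\sup_n \mu_n(K) < \infty$. Uniform continuity of $\phi^{w/o}$ on $[0,M]\times K$ combined with this uniform mass bound yields equicontinuity of $\{V^{(n)}_w\}$ and $\{V^{(n)}_o\}$ on $[0,M]$. For each fixed $\alpha$, $\phi_\alpha^{w/o}$ lies in $C(K)$, so weak-$*$ convergence gives pointwise convergence $V^{(n)}(\alpha) \to V(\alpha)$. Arzel\`a--Ascoli then promotes this to the required uniform convergence on $[0,M]$.

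The main obstacle I anticipate is the equicontinuity step, where the modulus of continuity of $V^{(n)}$ must be bounded uniformly along the sequence; this relies crucially on both the uniform mass bound $\sup_n \mu_n(K)<\infty$ and the joint continuity of $\phi^{w/o}_\alpha(y)$, especially across the diagonal $\alpha = y$, where $\phi^o_\alpha$ switches branches. The remaining ingredients---the Fubini interchange, the closed form of the inner antiderivative, and the inductive verification of the discrete identity---are routine once the substitution above is in place.
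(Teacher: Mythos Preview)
Your argument is correct and considerably more thorough than the paper's own proof, which is essentially a two-line sketch: it declares the maps ``obviously continuous'' in the stated topology and then writes one formula for $V_w$ on the discrete measure to match \eqref{tubes3}, deferring the rest to ``similarly''. Your Fubini reduction to the closed forms (which the paper only derives \emph{after} the lemma, as \eqref{tubes4}--\eqref{tubes5}) and the telescoping verification of \eqref{tubes2}--\eqref{tubes3} under the substitution $\alpha^2 = 2F(t)/(1+\kappa)$ are the right way to make that sketch honest. On the continuity side you supply what the paper omits entirely: the pairing against jointly continuous kernels, a uniform mass bound via Banach--Steinhaus, equicontinuity, and the pointwise-to-uniform upgrade.

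One point to flag: the assertion that ``all supports lie in a common compact $K=[a,b]\subset(0,+\infty)$'' does not follow from weak-$*$ convergence alone (consider $\mu_n=\delta_{1/n}\rightharpoonup^* 0$). You should state this as a standing hypothesis on the class of measures --- consistent with the paper's assumption of bounded support in $(0,+\infty)$, though the paper never makes the uniformity explicit --- or else fix a compact $K$ from the outset. Without some such restriction the map $\mu\mapsto V_w$ is in fact discontinuous, since in the example above $V_w^{(n)}(\alpha)\sim \tfrac{1+\kappa}{2\kappa}\,n\alpha^2\to\infty$; so this is a gap in the lemma as stated rather than in your reasoning.
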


\begin{proof}
The maps (\ref{def_w}) and \eqref{def_o} are obviously continuous in the described topology. 
Let $\mu = \sum\limits_{k=1}^n S_k \cdot \delta_{L_k}$. Then
for $ \alpha \in [ L_k , L_{ k+1 }] $ we have
\[ V_w ( \alpha ) = \frac{ 1+\kappa}{\kappa} \left( \sum_{ j=1 }^k \frac { S_j }{ L_j } \frac{ L_{ j }^2 - L_{j-1}^2 }2 + \sum_{ j=1 }^k \frac { S_j }{ L_j } \frac{ \alpha^2 - L_{j-1}^2 }2 \right) . \]
This implies (\ref{tubes2}), for $ \sqrt { \frac 2{1+k} F ( t ) } = L_j $. Formula (\ref{tubes3}) is verified in a similar way.  

%\frac{ 
\end{proof}

The choice of the topology on measures we have made appears natural because the  set of $\delta$-measures is total, hence with this choice the extension of the functions $ \tilde V_w $ and $ \tilde V_o $ described in this lemma is unique.

\begin{defin} The function $ V_w $ defined by (\ref{def_w})is non-decreasing, the function $ V_o $ defined by (\ref{def_o}) is increasing, hence $ \{ (V_w(\alpha), V_o(\alpha) + V_w(\alpha)) \}_{ \alpha > 0 } $ is a graph of a monotonic function. We will denote this monotonic function by $ \mathcal L ( \mu )$. 
\end{defin}
The function $\mathcal L ( \mu )$ has an important role in applications it is called displacement characteristic and shows how fraction of water in the extracted liquid changes in time.

In what follows we study the question of hystory matching i.e. can we find the ``environment'' (measure $\mu$) by the known displacement characteristic $\mathcal L ( \mu )$.

Notice also that formulae (\ref{def_w}) and (\ref{def_o}) can be simplified as follows,

\begin{gather*}
V_w(\alpha) = \frac{1 + \kappa}{\kappa}\int\limits_0^{\alpha} \int\limits_0^t \frac {t}{y}\ d\mu(y)\ dt = \\ = \frac{1 + \kappa}{\kappa} \int\limits_0^{\alpha} \int_y^{\alpha} \frac {t}{y}\ dt\ d\mu(y) =  \frac{1 + \kappa}{\kappa} \int\limits_0^{\alpha}\left. \frac{t^2}{2y}\right |_y^{\alpha} d\mu(y) = \\ = \frac{1 + \kappa}{\kappa} \int\limits_0^\alpha \left( \frac{\alpha^2}{2y} - \frac{y}{2} \right)\ d\mu(y) = \frac{1 + \kappa}{2\kappa} \int\limits_0^\alpha \frac{\alpha^2 - y^2}{y} d\mu(y)
\end{gather*}

\begin{gather*}
V_o(\alpha) = (1 + \kappa) \int\limits_{0}^{\alpha} \int\limits_{t}^{\infty} \frac{t} {\sqrt{y^2 - (1 - \kappa^2)t^2}}\ d\mu(y)\ dt = \\ = (1 + \kappa) \cdot \int\limits_{0}^{\infty} \int\limits_{0}^{\min(y, \alpha)} \frac{t} {\sqrt{y^2 - (1 - \kappa^2)t^2}}\ dt\ d\mu(y) = \\ = \frac{1}{1 - \kappa} \cdot \int\limits_{0}^{\infty} \int\limits_{0}^{\min(y, \alpha)} \frac{(1 - \kappa^2) \cdot t} {\sqrt{y^2 - (1 - \kappa^2)t^2}}\ dt\ d\mu(y) = \\ = \frac{-1}{1 - \kappa} \int_0^{\infty} \left. \sqrt{y^2 - (1 - \kappa^2)) \cdot t^2} \right|_{0}^{\min(y, \alpha)}\ d\mu(y) = \\ = \int\limits_0^\alpha y\ d\mu(y) + \frac{1}{1 - \kappa} \int\limits_\alpha^\infty (y - \sqrt{y^2 - (1 - \kappa^2)\cdot \alpha^2})\ d\mu(y)
\end{gather*}

For future reference, we write down explicitly the resulting formulae for the functions $V_o(\alpha), V_w(\alpha)$,

\begin{gather}
\label{tubes4}
V_w(\alpha) = \frac{1 + \kappa}{2\kappa}\int\limits_0^{\alpha} \frac{\alpha^2 - y^2}{y} d\mu(y) \\
\label{tubes5}
V_o(\alpha) = \int\limits_0^\alpha y d\mu(y) + \frac{1}{1 - \kappa} \int\limits_\alpha^\infty \left(y - \sqrt{y^2 - (1-\kappa^2) \alpha^2}\right) d\mu(y)
\end{gather}

The problem of recovering the measure $ \mu $ from the curve $ \mathcal L ( \mu ) $ has a scaling non-uniqueness described in the following

\begin{rem} Suppose that two measures $\mu_1, \mu_2$ satisfy $\mu_1(A) = k \cdot \mu_2(k \cdot A)$ for all $A \subset \mathbb{R}_+$, where $k \in \mathbb{R}_+$ and $k \cdot A = \{x \in \mathbb{R}: \frac{x}{k} \in A\}$. Then the curves $\mathcal L (\mu_1)=\mathcal L (\mu_2)$.
\end{rem}

\begin{proof}
We have
\begin{gather*}
V_{w, \mu_1}(\alpha) = \frac{1 + \kappa}{2\kappa} \int\limits_0^\alpha \frac{\alpha^2 - y^2}{y}\ d\mu_1(y) = \frac{1 + \kappa}{2\kappa} \int\limits_0^\alpha \frac{\alpha^2 - y^2}{y}\ k\ d\mu_2(ky) = \\ = \frac{1 + \kappa}{2\kappa} \int\limits_0^\alpha \frac{(k\alpha)^2 - (ky)^2}{ky}\ d\mu_2(ky) 
\end{gather*}
Denote $x = ky$:

\begin{gather*}
V_{w, \mu_1}(\alpha) = \frac{1 + \kappa}{2\kappa} \int\limits_0^{k\alpha} \frac{(k\alpha)^2 - (x)^2}{x}\ d\mu_2(x) = V_{w, \mu_2}(k\alpha)
\end{gather*}

Now let us do the same to $V_{o, \mu_1}$:

\begin{gather*}
V_{o,\mu_1}(\alpha) = \int\limits_0^\alpha y\ d\mu_1(y) + \frac{1}{1 - \kappa} \int\limits_\alpha^\infty \left(y - \sqrt{y^2 - (1 - \kappa^2)\cdot \alpha^2} \right)\ d\mu_1(y) = \\ = \int\limits_0^\alpha yk\ d\mu_2(ky) + \frac{1}{1 - \kappa} \int\limits_\alpha^\infty \left(y - \sqrt{y^2 - (1 - \kappa^2)\cdot \alpha^2}\right)k\ d\mu_2(ky) = \\ = \int\limits_0^\alpha (ky) \ d\mu_2(ky) + \frac{1}{1 - \kappa} \int\limits_\alpha^\infty \left(ky - \sqrt{(ky)^2 - (1 - \kappa^2)\cdot (k\alpha)^2}\right)\ d\mu_2(ky)
\end{gather*}

Denote $x = ky$:

\begin{gather*}
V_{o,\mu_1}(\alpha) = \int\limits_0^{k\alpha} x\ d\mu_2(x) + \frac{1}{1 - \kappa} \int\limits_{k\alpha}^\infty \left(x - \sqrt{x^2 - (1 - \kappa^2)\cdot (\alpha k)^2}\right)\ d\mu_2(x) = V_{o, \mu_2}(k\alpha)
\end{gather*}

We thus see that the curve $\mathcal L(\mu_1)$ is obtained by a reparametrization of $\mathcal L(\mu_2)$. \end{proof}

\section{Uniqueness of measure, describing the limit model with given curve}

As remarked in the previous section for any nonzero measure $ \mu $ the curve $(V_w(\alpha), V_o(\alpha) + V_w(\alpha))$ is a graph of a monotonic function, $G: [0, \infty) \to [0, \infty)$,

\begin{gather}
\label{tubes6}
V_w(\alpha) = G(V_o(\alpha) + V_w(\alpha)) , \alpha \ge 0 . 
\end{gather}

Notice that $ G ( s ) \to \infty $ as $ s\to \infty $ for $V_w(\alpha) \to \infty $ as $ \alpha \to \infty $. In fact $ G $ is a Lipschitz function.

\begin{lm}
Function $G$ described above satisfies $G \in Lip([0, +\infty))$ and $Lip(G) \le 1$, which is:
\begin{gather*}
|G(x) - G(y)| \le |x - y|
\end{gather*}
for all $x, y \in [0, +\infty)$.
\end{lm}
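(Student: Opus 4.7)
The plan is to exploit the parametric description: $G$ is defined implicitly by $\alpha \mapsto (V_w(\alpha) + V_o(\alpha), V_w(\alpha))$, so bounding the Lipschitz constant amounts to comparing increments of $V_w$ with increments of $V_w + V_o$ along the parameter $\alpha$.

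First I would verify that $G$ is actually well defined, i.e. that the parametrization $s = V_w(\alpha) + V_o(\alpha)$ is injective. Since $V_o$ is strictly increasing (as noted in the definition preceding the lemma) and $V_w$ is at least non-decreasing, the sum $V_w + V_o$ is strictly increasing in $\alpha$, so $\alpha$ is a well defined (continuous) function of $s$ on the range of $V_w + V_o$, and $G(s) = V_w(\alpha(s))$.

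Next, I would establish monotonicity of $V_w$ in $\alpha$ directly from the explicit formula \eqref{tubes4}: for $\alpha_1 < \alpha_2$,
\[
V_w(\alpha_2) - V_w(\alpha_1) = \frac{1 + \kappa}{2\kappa}\left(\int\limits_0^{\alpha_1} \frac{\alpha_2^2 - \alpha_1^2}{y}\, d\mu(y) + \int\limits_{\alpha_1}^{\alpha_2} \frac{\alpha_2^2 - y^2}{y}\, d\mu(y)\right) \ge 0,
\]
and both integrals are manifestly non-negative. The analogous monotonicity of $V_o$ is built into its definition (and is equally immediate from \eqref{tubes5}).

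Once both $V_w$ and $V_o$ are non-decreasing along the parametrization, the Lipschitz bound is automatic. Pick $x_1 \le x_2$ in the range of $V_w + V_o$ and let $\alpha_1 \le \alpha_2$ be the corresponding parameter values. Then
\[
|G(x_2) - G(x_1)| = V_w(\alpha_2) - V_w(\alpha_1) \le \bigl(V_w(\alpha_2) - V_w(\alpha_1)\bigr) + \bigl(V_o(\alpha_2) - V_o(\alpha_1)\bigr) = x_2 - x_1 = |x_2 - x_1|,
\]
using that $V_o(\alpha_2) - V_o(\alpha_1) \ge 0$. Extending to the closure of the range of $V_w + V_o$, and noting that $G(s) \to \infty$ as $s \to \infty$ so the domain is all of $[0,+\infty)$, gives $\mathrm{Lip}(G) \le 1$.

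There is no real obstacle here; the only subtlety is making sure the parametrization argument is clean, i.e. that $V_w + V_o$ is a genuine bijection between $[0,+\infty)$ (in $\alpha$) and $[0,+\infty)$ (in $s$), which is why I separate out injectivity (strict monotonicity of $V_o$) and surjectivity ($V_w \to \infty$, already remarked in the excerpt just before the lemma) as preliminary observations.
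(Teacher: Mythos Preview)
Your proof is correct and follows essentially the same approach as the paper: both arguments pick parameters $\alpha_1 < \alpha_2$ corresponding to the given inputs and bound $V_w(\alpha_2) - V_w(\alpha_1)$ by the larger quantity $(V_w(\alpha_2) - V_w(\alpha_1)) + (V_o(\alpha_2) - V_o(\alpha_1))$ using monotonicity of $V_o$. You simply add more detail about well-definedness and surjectivity of the parametrization, which the paper states without elaboration.
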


\begin{proof}
Since $V_w + V_o$ is a monotonic bijection from $[0, +\infty)$ onto itself for every $x, y \in [0, +\infty)$ we can take two points $\alpha_1 < \alpha_2$ such that $V_w(\alpha_1) + V_o(\alpha_1) = x$ and $V_w(\alpha_2) + V_o(\alpha_2) = y$. Let's write $\eqref{tubes6}$ for them and deduct the first one from the second one:

\begin{gather*}
G(V_o(\alpha_2) + V_w(\alpha_2)) - G(V_o(\alpha_1) + V_w(\alpha_1)) = V_w(\alpha_2) - V_w(\alpha_1) , 
\end{gather*}

And since $V_w$ and $V_o$ increase we get:

\begin{gather*}
V_w(\alpha_2) - V_w(\alpha_1) \le (V_w(\alpha_2) - V_w(\alpha_1)) + (V_o(\alpha_2) - V_o(\alpha_1)) = \\ = \left|V_w(\alpha_2) + V_o(\alpha_2) - V_w(\alpha_1) - V_o(\alpha_1)\right|
\end{gather*}

Hence:

\begin{gather*}
G(V_o(\alpha_2) + V_w(\alpha_2)) - G(V_o(\alpha_1) + V_w(\alpha_1)) \le \left|(V_w(\alpha_2) + V_o(\alpha_2)) - (V_w(\alpha_1) + V_o(\alpha_1))\right| , 
\end{gather*}
that is
\begin{gather*}
|G(x) - G(y)| \le |x - y|
\end{gather*}
\end{proof}

Now we will assume that we are given a function $G$ and want to solve $\eqref{tubes6}$ as an equation for $\mu$. Note, that in previous section we have showed that if this equation has a solution $\mu$, then all measures $\mu_k(A) = k^2 \cdot \mu(k \cdot A)$ are also solutions. We will later show how to fix one more numerical parameter in addition to function $G$ to get a unique solution. At first it will be useful to exclude $\mu$ and express $V_o(\alpha) + V_w(\alpha)$ in terms of $V_w(\alpha)$ to solve $\eqref{tubes6}$ as an equation for $V_w(\alpha)$ and then reestablish $\mu$ from $V_w(\alpha)$.

Since our measure $\mu$ has a bounded support let's consider a point $\alpha_{max} \in (0, +\infty)$ such that if $A \subset [0, +\infty)$ and $A \cap [0, \alpha_{max}) = \emptyset$ then $\mu(A) = 0$. Define
\[ R ( \alpha ) = \sqrt{ \alpha_{ max}^2 - ( 1 - \kappa^2 ) \alpha^2 } . \]
Then

\begin{lm}
If functions $V_w(\alpha), V_o(\alpha)$ satisfy $\eqref{tubes4}$, $\eqref{tubes5}$ then for $\alpha \in [0, \alpha_{max}]$ following equation holds,

\begin{gather} \label{VoVw}
V_o(\alpha) + V_w(\alpha) = \frac{\kappa}{1 - \kappa^2} \left(\alpha_{max} - R ( \alpha ) \right) \cdot V_w'(\alpha_{max}) + \\ + \frac{\kappa}{1 - \kappa^2} \cdot \left(\frac{\alpha_{max}^2 + \left( R ( \alpha ) \right)^2}{\alpha_{max} R (\alpha)} - 2\right) \cdot V_w(\alpha_{max}) + \\ + \kappa (1 - \kappa^2) \alpha^4 \cdot \int\limits_{\alpha}^{\alpha_{max}} \frac{V_w(y)}{y^2 \cdot (y^2 - (1 - \kappa^2) \cdot \alpha^2)^{\frac{3}{2}}}\ dy 
\end{gather} 
\end{lm}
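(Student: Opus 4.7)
My plan is to eliminate the measure $\mu$ by introducing the auxiliary function $M(\alpha) := \int_0^\alpha y^{-1}\, d\mu(y)$, which, by differentiating \eqref{tubes4}, satisfies the clean identity $V_w'(\alpha) = \tfrac{1+\kappa}{\kappa}\,\alpha\, M(\alpha)$. Combined with $d\mu(y)=y\,dM(y)$, this lets every integral against $\mu$ be handled by two rounds of Stieltjes integration by parts: the first converts $d\mu$ into $M(y)\,dy$, the second converts $M(y)\,dy$ into $V_w(y)\,dy$.

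The first round is applied to \eqref{tubes5}. Since $\supp\mu\subset[0,\alpha_{max}]$, the upper limit may be taken to be $\alpha_{max}$. Setting $f(y):=y\bigl(y-\sqrt{y^2-(1-\kappa^2)\alpha^2}\bigr)$, one checks $f(\alpha)=(1-\kappa)\alpha^2$ and $f(\alpha_{max})=\alpha_{max}(\alpha_{max}-R(\alpha))$. The boundary contributions at $y=\alpha$ arising from $\int_0^\alpha y\,d\mu = \alpha^2M(\alpha)-2\int_0^\alpha yM(y)\,dy$ and from the second integral cancel exactly (because $(1-\kappa)^{-1}f(\alpha)=\alpha^2$), leaving
\[
V_o(\alpha) = \tfrac{\alpha_{max}(\alpha_{max}-R(\alpha))}{1-\kappa}\,M(\alpha_{max}) - 2\int_0^\alpha yM(y)\,dy - \tfrac{1}{1-\kappa}\int_\alpha^{\alpha_{max}} f'(y)M(y)\,dy.
\]
Adding $V_w(\alpha)=\tfrac{1+\kappa}{\kappa}\int_0^\alpha yM(y)\,dy$ absorbs the middle integral and leaves a residue $\tfrac{1-\kappa}{1+\kappa}V_w(\alpha)$.

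For the second round I substitute $M(y)=\tfrac{\kappa}{(1+\kappa)y}V_w'(y)$ in the last integral. Direct computation gives $f'(\alpha_{max})/\alpha_{max} = 2-(\alpha_{max}^2+R(\alpha)^2)/(\alpha_{max}R(\alpha))$ and $f'(\alpha)/\alpha = -(1-\kappa)^2/\kappa$: the first produces exactly the $V_w(\alpha_{max})$ coefficient in \eqref{VoVw}, while the second generates a boundary term $-\tfrac{1-\kappa}{1+\kappa}V_w(\alpha)$ that precisely cancels the residue from the previous paragraph. The term $\tfrac{\alpha_{max}(\alpha_{max}-R(\alpha))}{1-\kappa}M(\alpha_{max})$ becomes $\tfrac{\kappa(\alpha_{max}-R(\alpha))}{1-\kappa^2}V_w'(\alpha_{max})$, matching the $V_w'(\alpha_{max})$ coefficient of \eqref{VoVw}.

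The only non-routine step, which I expect to be the main obstacle, is simplifying the resulting integral kernel $(f'(y)/y)'$. Writing $f'(y)/y = 2 - u/(yg)$ with $u:=2y^2-(1-\kappa^2)\alpha^2$ and $g:=\sqrt{y^2-(1-\kappa^2)\alpha^2}$, the quotient rule reduces the derivative to a fraction whose numerator is $4y^2g^2-u^2 = -(1-\kappa^2)^2\alpha^4$, a one-line algebraic identity. The kernel therefore collapses to $(1-\kappa^2)^2\alpha^4/\bigl(y^2(y^2-(1-\kappa^2)\alpha^2)^{3/2}\bigr)$, and the accumulated prefactor $\kappa/(1-\kappa^2)$ yields the factor $\kappa(1-\kappa^2)\alpha^4$ of \eqref{VoVw}. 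A minor side issue is that when $\mu$ has atoms, $M$ has jumps; this is handled either by careful bookkeeping of jump terms (they fold into the boundary evaluations) or by a density argument using the weak-$*$ continuity of $\mu\mapsto V_o,V_w$ supplied by Lemma~1.
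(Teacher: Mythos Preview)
Your argument is correct. The two integrations by parts are set up properly, the boundary terms are computed correctly (in particular the cancellation of $\alpha^2 M(\alpha)$ and then of $\tfrac{1-\kappa}{1+\kappa}V_w(\alpha)$), and the identity $4y^2g^2-u^2=-(1-\kappa^2)^2\alpha^4$ indeed collapses the kernel to the one in \eqref{VoVw}. The caveat about atoms of $\mu$ is handled exactly as you say.

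Your route, however, is genuinely different from the paper's. The paper does not derive \eqref{VoVw}; its entire proof is the single sentence ``To prove this fact we just substitute \eqref{tubes4} and \eqref{tubes5} into this equation.'' In other words, the paper treats \eqref{VoVw} as an identity to be \emph{verified}: one plugs the integral formulae for $V_w$, $V_w'$, and $V_o$ (all written as integrals against $d\mu$) into the right-hand side and checks that the result equals the left-hand side. Your approach instead \emph{derives} the formula constructively, starting from $V_o(\alpha)+V_w(\alpha)$ and eliminating $\mu$ through the auxiliary function $M$ and two rounds of integration by parts. The paper's verification is shorter to state but leaves the reader to discover the computation; your derivation explains where the three terms of \eqref{VoVw} come from and why the particular kernel arises, which is more informative and also shows how one would have found the formula in the first place.
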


Note that $V_w$ is smooth for $\alpha \ge \alpha_{max}$.

\begin{proof}
To prove this fact we just substitute $\eqref{tubes4}$ and $\eqref{tubes5}$ into this equation.

\end{proof}

Let us now substitute (\ref{VoVw}) to $\eqref{tubes6}$,

\begin{gather*}
V_w(\alpha) = G \left( \frac{\kappa}{1 - \kappa^2} \left(\alpha_{max} - R ( \alpha) \right) V_w'(\alpha_{max}) + \frac{\kappa}{1 - \kappa^2} \frac{\left( \alpha_{max} - R ( \alpha ) \right)^2}{\alpha_{max} R (\alpha)} V_w(\alpha_{max}) + \right. \\ \left. + \kappa (1 - \kappa^2) \alpha^4 \cdot \int\limits_{\alpha}^{\alpha_{max}} \frac{V_w(y)}{y^2 (y^2 - (1 - \kappa^2) \cdot \alpha^2)^{\frac{3}{2}}}\ dy \right) .
\end{gather*} 

As has been said above, we have to fix one more numerical parameter besides the function $G$ to get a unique solution of this equation. Let's assume that we are given $\alpha_{max}$ and graph of $G$ on the section $[0, V_w(\alpha_{max}) + V_o(\alpha_{max})]$. It's easy to see that:

\begin{gather*}
V_w(\alpha_{max}) = \frac{(1 + \kappa) \cdot \alpha_{max}^2}{2\kappa}\int\limits_0^{\infty} \frac{1}{y} d\mu(y) - \frac{1 + \kappa}{2\kappa}\int\limits_0^{\infty} y d\mu(y) \\
V_o(\alpha_{max}) = \int\limits_0^\infty y d\mu(y) \\
V_w'(\alpha_{max}) = \frac{(1 + \kappa) \cdot \alpha_{max}}{\kappa}\int\limits_0^{\infty} \frac{1}{y} d\mu(y)
\end{gather*}

Since $ V_w(\alpha_{max}) + V_o(\alpha_{max}) $ and 
\[ G(V_w(\alpha_{max}) + V_o(\alpha_{max})) = V_w(\alpha_{max}) , \]
can be read off from the graph of $G$, one can recover $ V_w(\alpha_{max}) $ and $ V_w'(\alpha_{max}) $ from the graph,
\begin{eqnarray}
V_w(\alpha_{max}) = G(V_w(\alpha_{max}) + V_o(\alpha_{max})) \label{grw} \\
V_w'(\alpha_{max}) = 2 \cdot V_w(\alpha_{max}) + \frac{1 + \kappa}{\kappa} \cdot V_o(\alpha_{max}) .\label{grwprime}
\end{eqnarray}
We denote 
\begin{gather*}
h(\alpha) = \frac{\kappa}{1 - \kappa^2} \cdot \left(\alpha_{max} - R ( \alpha )\right) V_w'(\alpha_{max}) + \frac{\kappa}{1 - \kappa^2} \frac{\left( \alpha_{max} - R ( \alpha ) \right)^2}{\alpha_{max} R (\alpha)} V_w(\alpha_{max})
\end{gather*}
Using (\ref{grw}) and (\ref{grwprime}) this formula can be rewritten as follows,
\begin{gather} \label{h} 
h(\alpha) = \frac{\kappa}{1 - \kappa^2} \left(\alpha_{max} - R ( \alpha) \right) \left(2 \cdot G(V_{max}) + \frac{1 + \kappa}{\kappa} (V_{max} - G(V_{max}))\right) + \\ + \frac{\kappa}{1 - \kappa^2}  \cdot \frac{\left( \alpha_{max} - R ( \alpha ) \right)^2}{\alpha_{max} R (\alpha)} G(V_{max})
\end{gather}
Here $V_{max}$ stands for $V_o(\alpha_{max}) + V_w(\alpha_{max})$. Thus choosing the function $G$ at the point $V_{max}$ and the numerical parameter $\alpha_{max}$ fixes our function $h$.

Let us define an operator $T: L^\infty([0, \alpha_{max}]) \to L^\infty([0, \alpha_{max}])$ by
\begin{gather}
\label{tubes7}
(TV)(\alpha) = \kappa \cdot (1 - \kappa^2) \cdot \alpha^4 \cdot \int\limits_{\alpha}^{\alpha_{max}} \frac{V(y)}{y^2 (y^2 - (1 - \kappa^2) \cdot \alpha^2)^{\frac{3}{2}}}\ dy
\end{gather}

Now we are ready to state the main result.

\begin{thm}
Given a function $G$ and a value $\alpha_{max}$ as described above, the equation
\begin{gather*}
V(\alpha) = G(h(\alpha) + (TV)(\alpha))
\end{gather*}
has a unique solution with respect to function $V \in L^{\infty}(0, \alpha_{max})$.
\end{thm}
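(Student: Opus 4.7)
The plan is to recast the equation as a fixed point of the map $\Phi : L^\infty(0,\alpha_{max}) \to L^\infty(0,\alpha_{max})$ defined by
\[
\Phi(V)(\alpha) := G\bigl(h(\alpha) + (TV)(\alpha)\bigr),
\]
and to invoke the Banach contraction principle. First I would check that $\Phi$ is well defined: $h$ is continuous and bounded on $[0,\alpha_{max}]$ by its explicit formula (\ref{h}) (note $R(\alpha) \ge \kappa\alpha_{max} > 0$); $G$ is continuous, even $1$-Lipschitz by Lemma 2, and so maps bounded functions to bounded functions; and $TV \in L^\infty$ for every $V \in L^\infty$ thanks to the crude pointwise estimate $y^2-(1-\kappa^2)\alpha^2 \ge \kappa^2 y^2$ valid for $y \ge \alpha$.

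The $1$-Lipschitz property of $G$ yields at once
\[
\|\Phi(V_1)-\Phi(V_2)\|_\infty \le \|T(V_1-V_2)\|_\infty \le \|T\|\,\|V_1-V_2\|_\infty,
\]
so the theorem reduces to showing that the operator norm of $T$ on $L^\infty(0,\alpha_{max})$ is strictly less than $1$. Since the kernel of $T$ is nonnegative, $\|T\|$ equals $\sup_{\alpha}(T\mathbf{1})(\alpha)$, so the remaining work is one explicit one-variable estimate.

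The main step is therefore to compute or bound this supremum. Performing the substitution $u = \alpha/y$ in the defining integral transforms it into
\[
(T\mathbf{1})(\alpha) = \kappa(1-\kappa^2)\int_{\alpha/\alpha_{max}}^{1}\frac{u^3\,du}{(1-(1-\kappa^2)u^2)^{3/2}},
\]
which is monotonically decreasing in $\alpha$ and hence maximized in the limit $\alpha\to 0^+$. A further substitution $w = 1-(1-\kappa^2)u^2$ evaluates the remaining integral $\int_0^1 u^3(1-(1-\kappa^2)u^2)^{-3/2}\,du$ in closed form to $\frac{1}{\kappa(1+\kappa)^2}$, giving the clean bound
\[
\|T\| \le \kappa(1-\kappa^2)\cdot\frac{1}{\kappa(1+\kappa)^2} = \frac{1-\kappa}{1+\kappa} < 1.
\]
Banach's principle then produces the unique fixed point in $L^\infty(0,\alpha_{max})$, proving the theorem. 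The delicate point is precisely this norm computation: the $1$-Lipschitz bound on $G$ from Lemma 2 cannot be improved, so the whole argument hinges on the fortuitous cancellation making the norm of $T$ come out to exactly $(1-\kappa)/(1+\kappa)$; had the explicit integral exceeded $1/(\kappa(1-\kappa^2))$, a pure contraction argument in $L^\infty$ would have failed and one would have been forced to pass to a weighted norm or to iterate $T$.
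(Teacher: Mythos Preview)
Your proposal is correct and follows essentially the same route as the paper: reduce to a Banach fixed point via $\lip(G)\le 1$ and then show $\|T\|_{L^\infty\to L^\infty}=\frac{1-\kappa}{1+\kappa}$ by an explicit one-variable integral. Your substitution $u=\alpha/y$ is simply the reciprocal of the paper's $x=y/\alpha$, and your closed-form evaluation $\int_0^1 u^3(1-(1-\kappa^2)u^2)^{-3/2}\,du=\frac{1}{\kappa(1+\kappa)^2}$ reproduces exactly the paper's constant; the extra remarks on well-definedness of $\Phi$ and the identification $\|T\|=\sup_\alpha(T\mathbf 1)(\alpha)$ via nonnegativity of the kernel are welcome tidying but do not change the argument.
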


\begin{proof}
Our main goal is to prove that the operator
\[ \psi \mapsto G \circ (h + T\psi ) \] is a contraction on $L^\infty([0, \alpha_{max}])$. This will imply the existense and uniqueness of the solution by a standard theorem of functional analysis \cite{simon_reed}. For any functions $V_1, V_2 \in L^\infty([0, \alpha_{max}])$,
\begin{gather*}
\left|G(h(\alpha) + (TV_1)(\alpha)) - G(h(\alpha) + (TV_2)(\alpha))\right| \le \\
\left|(h(\alpha) + (TV_1)(\alpha)) - (h(\alpha) + (TV_2)(\alpha))\right| = \\
= \left|T(V_1 - V_2) (\alpha)\right|
\end{gather*}

since $\lip(G) \le 1$. Then

\begin{gather*}
\left|T(V_1 - V_2) (\alpha)\right| \le \kappa \cdot (1 - \kappa^2) \cdot \alpha^4 \cdot \int\limits_{\alpha}^{\alpha_{max}} \frac{\left|V_1(y) - V_2(y)\right|}{y^2 \cdot (y^2 - (1 - \kappa^2) \cdot \alpha^2)^{\frac{3}{2}}}\ dy
\end{gather*}

Denoting $x = \dfrac{y}{\alpha}$ we get:

\begin{gather*}
\left|T(V_1 - V_2) (\alpha)\right| \le \kappa \cdot (1 - \kappa^2) \int\limits_{1}^{\frac{\alpha_{max}}{\alpha}} \frac{\|V_1 - V_2\|_{L^{\infty}}}{x^2 \cdot (x^2 - (1 - \kappa^2))^{\frac{3}{2}}}\ dx \le \\ 
\le \kappa \cdot (1 - \kappa^2) \int\limits_{1}^{\infty} \frac{1}{x^2 \cdot (x^2 - (1 - \kappa^2))^{\frac{3}{2}}}\ dx \cdot \|V_1 - V_2\|_{L^{\infty}} = \\ 
= \kappa \cdot (1 - \kappa^2) \cdot \frac {2 - (1 - \kappa^2) - 2 \cdot \sqrt{1 - (1 - \kappa^2)}}{(1 - \kappa^2)^2 \cdot \sqrt{1 - (1 - \kappa^2)}} \cdot \|V_1 - V_2\|_{L^{\infty}} = \\ 
= \frac{1 + \kappa^2 - 2\kappa}{1 - \kappa^2} \cdot \|V_1 - V_2\|_{L^{\infty}} = 
\frac{1 - \kappa}{1 + \kappa} \cdot \|V_1 - V_2\|_{L^{\infty}}
\end{gather*}

And since $\dfrac{1 - \kappa}{1 + \kappa} < 1$ we can see that operator $G \circ (h + T)$ is contracting, which proves the theorem.
\end{proof}

Now we want to prove that given function $G$ and value $\alpha_{max}$ we have at most one possible measure $\mu$. As stated in the theorem, with this data we can get only one function $V_w$, which satisfies our equation. Hence, following theorem finishes the proof of uniqueness of measure.

\begin{thm}
Consider function $V : [0, \alpha_{max}] \to \mathbb{R}$, which satisfies $\eqref{def_w}$ for some measures $\mu_1, \mu_2$, such that supports of  $\mu_1, \mu_2$ lie in $(0, \alpha_{max})$, then $\mu_1 = \mu_2$.
\end{thm}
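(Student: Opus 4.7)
The approach I would take is to read off from \eqref{def_w}, via a Fubini swap and one differentiation, the cumulative distribution function of the auxiliary measure $y^{-1}d\mu$, which in turn determines $\mu$ itself.

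Concretely, I would first rewrite \eqref{def_w} by interchanging the order of integration (legal because the integrand is non-negative) in the form
\[
V(\alpha) = \frac{1+\kappa}{\kappa}\int_0^\alpha t\,F_i(t)\,dt, \qquad F_i(\alpha) := \int_0^\alpha \frac{d\mu_i(y)}{y}, \quad i=1,2.
\]
The hypothesis $\supp(\mu_i) \subset (0, \alpha_{max})$ forces $\supp(\mu_i)$ to be a compact subset of the open interval, hence bounded away from $0$, so $1/y$ is bounded on it and each $F_i$ is a finite, right-continuous, non-decreasing function on $[0,\alpha_{max}]$. From the two identical expressions for $V$, the fundamental theorem of calculus gives $\alpha F_1(\alpha) = \alpha F_2(\alpha)$ for Lebesgue-a.e.\ $\alpha \in (0, \alpha_{max})$, and hence $F_1 = F_2$ almost everywhere; since monotone right-continuous functions are determined by their values on any dense set, this upgrades to $F_1 \equiv F_2$ on $[0,\alpha_{max}]$.

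To finish, I would observe that $F_i$ is exactly the cumulative distribution function of the Radon measure $\tilde\mu_i$ defined by $d\tilde\mu_i(y) = y^{-1} d\mu_i(y)$. The equality $F_1 \equiv F_2$ yields $\tilde\mu_1 = \tilde\mu_2$ by the standard uniqueness of a measure from its c.d.f., and multiplying back by the positive continuous weight $y$ (i.e., the inversion $d\mu_i = y\,d\tilde\mu_i$) gives $\mu_1 = \mu_2$.

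I do not anticipate a serious obstacle here. The only point requiring genuine care is the role of the support hypothesis: without $\supp(\mu_i) \subset (0, \alpha_{max})$ the function $F_i$ could blow up near $0$ and spoil both the Fubini rearrangement and the inversion $\mu_i \leftrightarrow \tilde\mu_i$. Everything else is a routine application of Fubini and the fundamental theorem of calculus.
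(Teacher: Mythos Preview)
Your proposal is correct and follows essentially the same route as the paper: define $F_i(t)=\int_0^t y^{-1}\,d\mu_i(y)$, rewrite $V$ as $\tfrac{1+\kappa}{\kappa}\int_0^\alpha tF_i(t)\,dt$, differentiate to get $F_1=F_2$ a.e., and use right-continuity to conclude. Your write-up is in fact a bit more careful than the paper's, spelling out the passage from $F_1\equiv F_2$ to $\mu_1=\mu_2$ via the auxiliary measure $y^{-1}d\mu_i$ and noting explicitly how the support hypothesis keeps $F_i$ finite.
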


\begin{proof}
At first consider measure $\mu$ which satisfies 

\begin{gather*}
V(\alpha) = \frac{1 + \kappa}{\kappa}\int\limits_0^{\alpha} t \cdot \int\limits_0^t \frac {1}{y}\ d\mu(y)\ dt
\end{gather*}

Let's denote $F(t) = \int\limits_0^t \frac{1}{y}\ d\mu(y)$. Then:

\begin{gather*}
V(\alpha) = \frac{1 + \kappa}{\kappa} \int\limits_0^\alpha t  \cdot F(t)\ dt
\end{gather*}

And hence:

\begin{gather}
\label{tubes8}
V'(\alpha) = \frac{1 + \kappa}{\kappa} \cdot \alpha \cdot F(\alpha)
\end{gather}

almost everywhere (except points of positive measure $\mu$). 

Now let's do the same for measures $\mu_1$ and $\mu_2$ and get that equality

\begin{gather*}
F_1(\alpha) = F_2(\alpha)
\end{gather*}

holds everywhere on $[0, \alpha_{max}]$ except at most countable set of points, but $F_1$, $F_2$ are right-continuous, which implies $\mu_1 = \mu_2$

\end{proof}

\section{Stability of measure, found from given curve} 

After getting a unique solution for $V(\alpha) = G(h(\alpha) + (TV)(\alpha))$ we are interested in its dependence on the function $G$. 

\begin{thm}
Let $ \alpha_{max} $ be a positive number, and $ G_{ 1,2 } $ be two nonnegative monotonic bounded functions on $ [ 0 , \alpha_{ max} ] $, and the operator $T$ be the one from $\eqref{tubes7}$. Let $h_1, h_2$ be the functions defined by (\ref{h}) for $ G = G_1 $ and $ G = G_2 $, respectively, and $V_1$ and $V_2$ satisfy 
\begin{gather}
\label{thm_3_1}
V_1(\alpha) = G_1(h_1(\alpha) + (TV_1)(\alpha))\\
\label{thm_3_2}
V_2(\alpha) = G_2(h_2(\alpha) + (TV_2)(\alpha)) . 
\end{gather} 
Then 
\[ \|V_1 - V_2\|_{L^\infty([0, \alpha_{max}])} \le \dfrac{1 + \kappa}{2\kappa}\left(\alpha_{max} + \dfrac{3 + \kappa}{1 + \kappa}\right) \|G_1 - G_2\|_{L^\infty([0, V_{max}])} . \]
\end{thm}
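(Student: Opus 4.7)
The natural approach is to compare the two fixed-point equations by inserting an intermediate quantity. Writing
\[
V_1(\alpha) - V_2(\alpha) = \bigl[G_1(h_1(\alpha) + (TV_1)(\alpha)) - G_1(h_2(\alpha) + (TV_2)(\alpha))\bigr] + \bigl[G_1 - G_2\bigr](h_2(\alpha) + (TV_2)(\alpha)),
\]
the second bracket is bounded in absolute value by $\|G_1 - G_2\|_{L^\infty([0, V_{max}])}$, and, since $\lip(G_1) \le 1$ by Lemma 2, the first bracket is at most $|h_1(\alpha) - h_2(\alpha)| + |T(V_1 - V_2)(\alpha)|$. This reduces the problem to bounding $\|h_1 - h_2\|_\infty$ and $\|T(V_1 - V_2)\|_\infty$ separately.

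For the operator $T$ I simply recycle the contraction estimate from the proof of Theorem 2: the same substitution $x = y/\alpha$ yields $\|T(V_1 - V_2)\|_\infty \le \frac{1-\kappa}{1+\kappa}\|V_1 - V_2\|_\infty$. For $h_1 - h_2$ I use the explicit formula (\ref{h}). Since $\alpha_{max}$ and $V_{max}$ are shared between the two problems, $h_1(\alpha) - h_2(\alpha)$ factorizes as $(G_1(V_{max}) - G_2(V_{max}))$ times a coefficient depending only on $\alpha$, $\alpha_{max}$, $\kappa$, namely
\[
\frac{\kappa}{1-\kappa^2}(\alpha_{max} - R(\alpha)) \cdot \frac{\kappa - 1}{\kappa} + \frac{\kappa}{1-\kappa^2} \cdot \frac{(\alpha_{max} - R(\alpha))^2}{\alpha_{max} R(\alpha)}.
\]
A triangle inequality together with $|G_1(V_{max}) - G_2(V_{max})| \le \|G_1 - G_2\|_\infty$, and the observation that both constituent terms are monotone in $\alpha$ and reach their maxima at $\alpha = \alpha_{max}$ (where $R(\alpha_{max}) = \kappa\alpha_{max}$), produces a bound of the form $\|h_1 - h_2\|_\infty \le C(\kappa, \alpha_{max}) \|G_1 - G_2\|_\infty$ with $C$ affine in $\alpha_{max}$.

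Assembling the three bounds and taking $\sup_\alpha$ yields
\[
\|V_1 - V_2\|_\infty \le \|G_1 - G_2\|_\infty + \|h_1 - h_2\|_\infty + \tfrac{1-\kappa}{1+\kappa}\|V_1 - V_2\|_\infty,
\]
and isolating $\|V_1 - V_2\|_\infty$ produces the $\bigl(1 - \frac{1-\kappa}{1+\kappa}\bigr)^{-1} = \frac{1+\kappa}{2\kappa}$ prefactor appearing in the statement, with the parenthetic factor $\alpha_{max} + \frac{3+\kappa}{1+\kappa}$ coming from $1 + C(\kappa, \alpha_{max})$. The only genuine obstacle is bookkeeping: the two summands inside the $h$-coefficient carry opposite signs, so the triangle inequality is lossy, but the loss is absorbed into the stated (slightly generous) constant. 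No analytic idea beyond the contraction estimate already used for $T$ in Theorem 2 is required.
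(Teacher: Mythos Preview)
Your proposal is correct and follows essentially the same route as the paper: the same splitting $V_1-V_2=[G_1(\cdot)-G_1(\cdot)]+[G_1-G_2](\cdot)$, the Lipschitz bound $\lip(G_1)\le 1$, the contraction estimate $\|T(V_1-V_2)\|_\infty\le\frac{1-\kappa}{1+\kappa}\|V_1-V_2\|_\infty$ recycled from Theorem~2, the explicit bound on $h_1-h_2$ via formula~(\ref{h}) (with the triangle inequality applied to the two oppositely-signed summands, evaluated at $\alpha=\alpha_{max}$ where $R=\kappa\alpha_{max}$), and the final rearrangement producing the $\frac{1+\kappa}{2\kappa}$ prefactor. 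The paper's own proof is line-for-line the same argument.
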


\begin{proof}
First, denote $ \delta = \|G_1 - G_2\|_{L^\infty([0, V_{max}])} $ and subtract $\eqref{thm_3_2}$ from $\eqref{thm_3_1}$:
\begin{gather*}
\left|V_1(\alpha) - V_2(\alpha)\right| = \left|G_1(h_1(\alpha) + (TV_1)(\alpha)) - G_2(h_2(\alpha) + (TV_2)(\alpha))\right| \le \\ 
\le \left|G_1(h_1(\alpha) + (TV_1)(\alpha)) - G_1(h_2(\alpha) + (TV_2)(\alpha))\right| + \\ + \left|G_1(h_2(\alpha) + (TV_2)(\alpha)) - G_2(h_2(\alpha) + (TV_2)(\alpha))\right| \le \\ 
\le \left|h_1(\alpha) - h_2(\alpha)\right| + \left|T(V_1 - V_2)(\alpha)\right| + \delta .
\end{gather*}
Repeating the calculations from the proof of theorem 1 we get
\begin{gather*}
\left|T(V_1 - V_2) (\alpha)\right| \le \frac{1 - \kappa}{1 + \kappa} \cdot \|V_1 - V_2\|_{L^{\infty}}
\end{gather*}

And since $\dfrac{1 - \kappa}{1 + \kappa} < 1$:
\begin{gather*}
\|V_1 - V_2\|_{L^\infty} \cdot \left(1 - \frac{1 - \kappa}{1 + \kappa}\right) \le \|h_1 - h_2\|_{L^\infty} + \delta \\
\|V_1 - V_2\|_{L^\infty} \le \frac{1 + \kappa}{2\kappa}(\|h_1 - h_2\|_{L^\infty} + \delta)
\end{gather*}

It remains to estimate the difference $ h_1 - h_2 $,

\begin{gather*}
\left|h_1(\alpha) - h_2(\alpha)\right| \le \frac{\kappa}{1 - \kappa^2} \left(\alpha_{max} -  R ( \alpha ) \right) \left(\frac{1 + \kappa}{\kappa} - 2 \right) \left|G_1(V_{max}) - G_2(V_{max})\right| + \\ + \frac{\kappa}{1 - \kappa^2} \frac{\left( \alpha_{max} - R ( \alpha ) \right)^2}{\alpha_{max} R (\alpha)}\left|G_1(V_{max}) - G_2(V_{max})\right| = \\ 
= \frac{\kappa}{1 - \kappa^2} \left[\left(\alpha_{max} - R ( \alpha )\right) \frac{1 - \kappa}{\kappa} + \frac{\left( \alpha_{max} - R ( \alpha ) \right)^2}{\alpha_{max} R (\alpha)} \right] \left|G_1(V_{max}) - G_2(V_{max})\right| \le \\ 
\le \frac{\kappa}{1 - \kappa^2} \left(\alpha_{max} \frac{1 - \kappa }{\kappa} + \frac{2 - 2\kappa}{\kappa}\right) \cdot \delta \le \left(\alpha_{max} + \frac{2}{1 + \kappa}\right) \delta
\end{gather*}
On substituting we have
\begin{gather*}
\|V_1 - V_2\|_{L^\infty} \le \frac{1 + \kappa}{2\kappa}\left(\alpha_{max} + \frac{3 + \kappa}{1 + \kappa}\right) \delta , 
\end{gather*}
and the proof is completed.
\end{proof}

This result alone doesn't imply the stability of the measure. As will be shown we also have to estimate $\|V_1' - V_2'\|_{L^\infty(0, \alpha_{max})}$ and $\|V_1'' - V_2''\|_{L^\infty(0, \alpha_{max})}$. Let us start with a technical result. 

\begin{lm}
Let $\alpha_{min} \in ( 0 , \alpha_{ max}) $. Then formula (\ref{tubes7}) defines a bounded operator from $ L^\infty(\alpha_{min}, \alpha_{max})$ to $ C(\alpha_{min}, \alpha_{max})$, from $C(\alpha_{min}, \alpha_{max})$ to $ C^1(\alpha_{min}, \alpha_{max})$, and from $ C^1(\alpha_{min}, \alpha_{max})$ to $ C^2(\alpha_{min}, \alpha_{max} ) $.
\end{lm}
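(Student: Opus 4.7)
The plan is to exploit the fact that $\alpha_{min} > 0$ moves us away from any possible singularity. For $(y,\alpha)$ in the closed triangle $\Delta := \{(y,\alpha) : \alpha_{min} \le \alpha \le y \le \alpha_{max}\}$ one has
\begin{gather*}
y^2 - (1-\kappa^2)\alpha^2 \;\ge\; y^2 - (1-\kappa^2)y^2 \;=\; \kappa^2 y^2 \;\ge\; \kappa^2 \alpha_{min}^2,
\end{gather*}
so the kernel $K(y,\alpha) := \bigl[y^2\bigl(y^2-(1-\kappa^2)\alpha^2\bigr)^{3/2}\bigr]^{-1}$ and all its $\alpha$-derivatives are jointly $C^\infty$ and uniformly bounded on $\Delta$. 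Writing $(TV)(\alpha) = \kappa(1-\kappa^2)\alpha^4 \int_\alpha^{\alpha_{max}} K(y,\alpha) V(y)\,dy$, the three statements will follow by successive applications of Leibniz's rule.

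For the first mapping, $T : L^\infty \to C$, I would fix $\alpha_1 < \alpha_2$ in $(\alpha_{min},\alpha_{max})$ and split $(TV)(\alpha_1)-(TV)(\alpha_2)$ into a boundary contribution over the short interval $[\alpha_1,\alpha_2]$ (controlled by the supremum of $K$ on $\Delta$ times $\alpha_2-\alpha_1$) and an interior contribution on $[\alpha_2,\alpha_{max}]$ (controlled by the modulus of continuity of $\alpha\mapsto K(\cdot,\alpha)$, which is uniform on $\Delta$). Both tend to zero as $\alpha_1\to\alpha_2$, giving continuity.

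For $T: C \to C^1$, Leibniz's rule yields
\begin{gather*}
(TV)'(\alpha) = 4\kappa(1-\kappa^2)\alpha^3 \!\!\int_\alpha^{\alpha_{max}} \!\!K(y,\alpha) V(y)\,dy \;-\; \frac{V(\alpha)}{\kappa^2 \alpha}\cdot\frac{1-\kappa^2}{1} \;+\; \kappa(1-\kappa^2)\alpha^4 \!\!\int_\alpha^{\alpha_{max}} \!\!\partial_\alpha K(y,\alpha) V(y)\,dy,
\end{gather*}
where the boundary term used $K(\alpha,\alpha)=1/(\kappa^3\alpha^5)$. The boundary term is continuous precisely because $V$ is continuous, and each of the two integrals is continuous in $\alpha$ by exactly the argument used in the first step, applied to the bounded, jointly continuous kernels $K$ and $\partial_\alpha K$. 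Hence $(TV)'\in C(\alpha_{min},\alpha_{max})$.

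For $T: C^1\to C^2$ I would differentiate the expression for $(TV)'$ once more. One new boundary term arises from differentiating $V(\alpha)/(\kappa^2\alpha)$, producing a factor of $V'(\alpha)$; this is where the hypothesis $V\in C^1$ is essential, and it is the only point in the argument that actually requires it. All remaining terms are either new integrals against $\partial_\alpha^2 K$ (again bounded and continuous on $\Delta$, so continuous in $\alpha$ by the first-step argument) or boundary terms of the form $V(\alpha)\cdot g(\alpha)$ with $g$ smooth in $\alpha$ on $[\alpha_{min},\alpha_{max}]$, continuous because $V\in C\subset C^0$. The only real obstacle is the bookkeeping of boundary terms generated by successive differentiation of the integral whose lower limit is $\alpha$; once the uniform lower bound $y^2-(1-\kappa^2)\alpha^2\ge\kappa^2\alpha_{min}^2$ is in place, every kernel that appears is smooth and bounded, so Leibniz's rule and a standard continuity-of-parameter-integrals argument close the proof.
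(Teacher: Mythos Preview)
Your proposal is correct and follows essentially the same approach as the paper: both arguments hinge on the observation that the kernel $K(y,\alpha)$ is $C^\infty$ on the closed triangle $\overline{\Delta}$ (your inequality $y^2-(1-\kappa^2)\alpha^2\ge\kappa^2\alpha_{min}^2$ makes this explicit) and then invoke standard continuity-in-parameter and Leibniz differentiation for the integral $\int_\alpha^{\alpha_{max}}K(y,\alpha)V(y)\,dy$. The paper's proof simply states this in two sentences without unpacking the successive differentiations, whereas you spell out the boundary terms and where each regularity hypothesis on $V$ is used.
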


\begin{proof}
The function
\begin{gather*}
K(y, \alpha) = \kappa \cdot (1 - \kappa^2) \cdot \frac{\alpha^4}{y^2 \cdot (y^2 - (1 - \kappa^2) \cdot \alpha^2)^{\frac{3}{2}}} 
\end{gather*}
belongs to $C^\infty\left(\overline \Delta \right)$, where $ \Delta = \{ ( x , y ) \colon \alpha_{min } \le x \le y \le \alpha_{ max } \} $. Now we can rewrite our operator as follows:

\begin{gather*}
(TV)(\alpha) = \int\limits_{\alpha}^{\alpha_{max}} V(y) \cdot K(y, \alpha)\ dy
\end{gather*}
The assertions of the lemma now follow from the elementary continuity properties of integral with respect to a parameter and the consequential differentiation of this formula.
\end{proof}

\begin{lm}
Let $h_1, h_2$ be the functions defined by (\ref{h}) for $ G = G_1 $ and $ G = G_2 $, respectively. If $|G_1(V_{max}) - G_2(V_{max})| \le \delta$ then for every $k \in \mathbb{N} \cup \{0\}$ there exists a $c_k = c_k(\alpha_{max}, V_{max}, \kappa)$ such that 
\[ \left\|\dfrac{d^k h_1}{d\alpha^k} - \dfrac{d^k h_2}{d\alpha^k}\right\|_{L^\infty([0, \alpha_{max}])} \le c_k \cdot \delta \] and \[ \left\|\dfrac{d^k h_2}{d\alpha^k}\right\|_{L^\infty([0, \alpha_{max}])} \le c_k . \]
\end{lm}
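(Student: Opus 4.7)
The strategy is to exploit the fact that formula (\ref{h}) presents $h(\alpha)$ as an affine function of the single scalar $G(V_{max})$, with the $\alpha$-dependence entering only through $R(\alpha) = \sqrt{\alpha_{max}^2 - (1-\kappa^2)\alpha^2}$ and with prefactors depending only on $\alpha_{max}, V_{max}, \kappa$. First I would collect terms so as to write
\[ h(\alpha) = A(\alpha)\, G(V_{max}) + B(\alpha), \]
where $B$ does not involve $G$ at all, and a short algebraic manipulation gives the explicit forms
\[ A(\alpha) = -\frac{\alpha_{max} - R(\alpha)}{1+\kappa} + \frac{\kappa}{1-\kappa^2}\,\frac{\bigl(\alpha_{max} - R(\alpha)\bigr)^2}{\alpha_{max}\, R(\alpha)}, \qquad B(\alpha) = \frac{V_{max}}{1-\kappa}\bigl(\alpha_{max} - R(\alpha)\bigr). \]

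The key analytic observation is that on $[0, \alpha_{max}]$ one has $R(\alpha) \ge \kappa\alpha_{max} > 0$, so $R$ (and hence $1/R$) is $C^\infty$ on this compact interval. Consequently $A, B \in C^\infty([0, \alpha_{max}])$, and each of their $k$-th derivatives is bounded in sup-norm by constants $a_k, b_k$ depending only on $\alpha_{max}, V_{max}, \kappa$. From the decomposition and $B_1 = B_2$ one obtains
\[ \frac{d^k (h_1 - h_2)}{d\alpha^k}(\alpha) = \bigl(G_1(V_{max}) - G_2(V_{max})\bigr)\,\frac{d^k A}{d\alpha^k}(\alpha), \]
so that $\|d^k(h_1 - h_2)/d\alpha^k\|_{L^\infty} \le a_k \delta$, establishing the first inequality.

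For the second inequality I would combine the bounds on $A, B$ with the a priori estimate $|G_2(V_{max})| \le V_{max}$, which holds because the displacement characteristic always satisfies $G(V_{max}) = V_w(\alpha_{max}) \le V_w(\alpha_{max}) + V_o(\alpha_{max}) = V_{max}$. This yields $\|d^k h_2/d\alpha^k\|_{L^\infty} \le a_k V_{max} + b_k$; setting $c_k := \max\bigl(a_k,\, a_k V_{max} + b_k\bigr)$ handles both assertions simultaneously. The only step that is not completely routine is the regularity observation, but it amounts to the elementary fact that $R(\alpha)^2$ is a polynomial in $\alpha$ bounded below by $(\kappa\alpha_{max})^2$ on $[0, \alpha_{max}]$; once this is noted, the rest is bookkeeping of derivatives via the chain rule and extracting sup-norms over a compact interval.
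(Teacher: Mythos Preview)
Your proof is correct and follows essentially the same approach as the paper: the paper likewise writes $h(\alpha)=p(\alpha)V_{max}+q(\alpha)G(V_{max})$ for $p,q\in C^\infty([0,\alpha_{max}])$ and then reads off both estimates, using $G_2(V_{max})\le V_{max}$ for the second. You supply more detail than the paper (the explicit forms of $A,B$ and the observation $R(\alpha)\ge \kappa\alpha_{max}>0$ justifying smoothness), but the argument is the same.
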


\begin{proof}
It is easy to see that 
\begin{gather*}
h_1(\alpha) = p(\alpha) V_{max} + q(\alpha) G_1(V_{max})\\
h_2(\alpha) = p(\alpha) V_{max} + q(\alpha) G_2(V_{max})
\end{gather*}
for some $p, q \in C^\infty([0, \alpha_{max}])$. Hence 

\begin{gather*}
\left|\frac{d^k h_1}{d\alpha^k}(\alpha) - \frac{d^k h_2}{d \alpha^k}(\alpha)\right| \le \|q\|_{C^k([0, \alpha_{max}])} \cdot |G_1(V_{max}) - G_2(V_{max})| \le c_k \delta
\end{gather*}

And

\begin{gather*}
\left|\frac{d^k h_2}{d\alpha^k}(\alpha)\right| \le \|q\|_{C^k([0, \alpha_{max}])} \cdot |G_2(V_{max})| + \|p\|_{C^k([0, \alpha_{max}])} \cdot V_{max} \le c_k
\end{gather*}

since $G_2(V_{max}) \le V_{max}$.

\end{proof}

%To get such estimate we have to require stronger properties for $G_1$ and $G_2$. 
%Since $\lip(G_1) \le 1$ and $\lip(G_2) \le 1$ there exist $G_1'$ and $G_2'$ almost everywhere on $(0, V_{max})$ (obviously $\|G_1'\|_{L^\infty(0, V_{max})} \le 1$ and $\|G_2'\|_{L^\infty(0, V_{max})} \le 1$). In addition we will ask for a really strong restriction on $G_2$, which is $\lip(G_2') < \infty$. We will also assume that $G_1$ and $G_2$ are such that $\supp(V_1), \supp(V_2) \subset [\alpha_{min}, \infty)$.

\begin{prop}

If $G_1, G_2 \in C([0, V_{max}])$, $\|G_1 - G_2\|_{L^\infty([0, V_{max}])} \le \delta$, $\|G_1' - G_2'\|_{L^\infty((0, V_{max}))} \le \delta$, $\lip(G_2) < \infty$ and  

\begin{gather*}
V_1(\alpha) = G_1(h_1(\alpha) + (TV_1)(\alpha))\\
V_2(\alpha) = G_2(h_2(\alpha) + (TV_2)(\alpha))
\end{gather*}

are such that $\supp(V_1), \supp(V_2) \subset [\alpha_{min}, \alpha_{max}]$, then there exists $c = c(\alpha_{max}, V_{max}, \lip(G_2'), \alpha_{min}, \kappa)$ such that $\|V_1' - V_2'\|_{L^\infty} \le c \cdot \delta$

\end{prop}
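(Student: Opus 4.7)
The plan is to differentiate the two fixed-point equations and combine the resulting identity with the $C^0$-stability of Theorem 4 and the smoothing properties of $T$ and $h$ proved in Lemmas 3 and 4. Since $V_1$ and $V_2$ vanish outside $[\alpha_{min}, \alpha_{max}]$, it is enough to control $V_1' - V_2'$ on this interval. There Lemma 3 makes $TV_i$ a $C^1$-function, Lemma 4 makes $h_i$ smooth, and the assumptions $\lip(G_2') < \infty$ and $\|G_1' - G_2'\|_{L^\infty} \le \delta$ ensure $G_1, G_2 \in C^1$. Setting $s_i = h_i + TV_i$, the chain rule then gives $V_i'(\alpha) = G_i'(s_i(\alpha)) \cdot s_i'(\alpha)$.

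I would then use the three-term decomposition
\begin{gather*}
V_1' - V_2' = [G_1'(s_1) - G_2'(s_1)]\, s_1' + [G_2'(s_1) - G_2'(s_2)]\, s_1' + G_2'(s_2) (s_1' - s_2')
\end{gather*}
and estimate each factor separately. The first bracket is at most $\delta$ by hypothesis. The second bracket is at most $\lip(G_2') \cdot \|s_1 - s_2\|_{L^\infty}$, and $\|s_1 - s_2\|_{L^\infty} \le \|h_1 - h_2\|_{L^\infty} + \|T(V_1 - V_2)\|_{L^\infty}$ is of order $\delta$ by Lemma 4 with $k=0$, by Theorem 4, and by the $L^\infty \to L^\infty$ boundedness of $T$ implicit in the proof of Theorem 1. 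The factor $|G_2'(s_2)|$ in the third term is bounded by $\lip(G_2) \le 1$ thanks to Lemma 2. The remaining quantity $\|s_1' - s_2'\|_{L^\infty}$ is bounded by $\|h_1' - h_2'\|_{L^\infty} + \|(T(V_1 - V_2))'\|_{L^\infty}$, which is again of order $\delta$ by Lemma 4 with $k=1$ and by Lemma 3 (sending $L^\infty(\alpha_{min}, \alpha_{max})$ into $C^1$) combined with Theorem 4. Finally, $\|s_1'\|_{L^\infty}$ is bounded in terms of $\alpha_{max}, V_{max}, \alpha_{min}, \kappa$ by applying Lemma 4 to $h_1'$ and Lemma 3 to $TV_1$, using $\|V_1\|_{L^\infty} \le V_{max}$ (which follows from $V_1 = G_1(s_1)$ together with $G_1(t) \le t$ for a valid displacement characteristic).

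The entire argument is routine bookkeeping, and combining the above bounds immediately yields $\|V_1' - V_2'\|_{L^\infty} \le c \cdot \delta$ with the claimed dependence of $c$. The main conceptual point — and the only place where $\lip(G_2') < \infty$ is essential — is the middle term of the decomposition: it is precisely what upgrades the $C^0$-estimate of Theorem 4 to a $C^1$-estimate, because without Lipschitz $G_2'$ the composition $G_2' \circ s_i$ would not transfer the $C^0$-closeness of $s_1$ and $s_2$ into $C^0$-closeness of $G_2'(s_1)$ and $G_2'(s_2)$. The $\alpha_{min}$ dependence of $c$ enters solely through Lemma 3, where staying away from $\alpha = 0$ is what makes the kernel $K(y,\alpha)$ of $T$ smooth on the relevant closed triangle.
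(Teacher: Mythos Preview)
Your proof is correct and follows essentially the same route as the paper: differentiate the fixed-point equations, apply a three-term telescoping decomposition of $G_1'(s_1)s_1' - G_2'(s_2)s_2'$, and bound each piece using the $C^0$-stability (Theorem~4), the smoothing of $T$ (Lemma~4), and the $h$-bounds (Lemma~5). The only cosmetic differences are that the paper groups $s_1'-s_2'$ with $G_1'(s_1)$ and the two bracket terms with $s_2'$ (the mirror image of your grouping), and your references to ``Lemma~3'' and ``Lemma~4'' correspond to Lemmas~4 and~5 in the paper's numbering.
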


\begin{proof}

We will start proving this proposition very similar to the previous theorem, but this time we will differentiate $\eqref{thm_3_1}$ and $\eqref{thm_3_2}$ and then subtract first from second.

\begin{gather}
\left|V_1'(\alpha) - V_2'(\alpha)\right| = \left|G_1'(h_1(\alpha) + (TV_1)(\alpha)) \cdot \left(h_1'(\alpha) + \frac{d(TV_1)}{d\alpha}(\alpha)\right) - \right. \nonumber \\ 
\left. - G_2'(h_2(\alpha) + (TV_2)(\alpha)) \cdot \left(h_2'(\alpha) + \frac{d(TV_2)}{d\alpha}(\alpha)\right)\right| \le \nonumber \\ 
\left|G_1^\prime (h_1(\alpha) + (TV_1)(\alpha)) \left (h_1'(\alpha) + \frac{d(TV_1)}{d\alpha}(\alpha) - \left( h_2'(\alpha) + \frac{d(TV_2)}{d\alpha}(\alpha)\right) \right) \right| + \nonumber \\ 
\left|\left[ G_1^\prime (h_1(\alpha) + (TV_1)(\alpha)) - G_2^\prime (h_1(\alpha) + (TV_1)(\alpha)) \right] (h_2'(\alpha) + \frac{d(TV_2)}{d\alpha}(\alpha))\right| + \nonumber \\
\left| \left( G_2'(h_2(\alpha) + (TV_2)(\alpha)) - G_2'(h_1(\alpha) + (TV_1)(\alpha) )  \right) \left( h_2'(\alpha) + \frac{d(TV_2)}{d\alpha}(\alpha) \right)\right| \le \nonumber \\ 
|G_1'(h_1(\alpha) + (TV_1)(\alpha))| \left(|h_1'(\alpha) - h_2'(\alpha)| + \left|\frac{d(T( V_1- V_2))}{d\alpha}(\alpha)\right| \right) + \nonumber \\ 
|G_2'(h_1(\alpha) + (TV_1)(\alpha)) - G_1'(h_1(\alpha) + (TV_1)(\alpha))| \cdot \left|h_2'(\alpha) + \frac{d(TV_2)}{d\alpha}(\alpha)\right| + \nonumber \\
|G_2'(h_1(\alpha) + (TV_1)(\alpha)) - G_2'(h_2(\alpha) + (TV_2)(\alpha))| \cdot \left|h_2'(\alpha) + \frac{d(TV_2)}{d\alpha}(\alpha)\right| \le \nonumber \\ 
\le 1 \cdot |h_1'(\alpha) - h_2'(\alpha)| + 1 \cdot \left|\frac{d(T( V_1- V_2))}{d\alpha}(\alpha)\right| + \nonumber \\
\|G_1' - G_2'\|_{L^\infty} \cdot \left|h_2'(\alpha) + \frac{d(TV_2)}{d\alpha}(\alpha)\right| + \nonumber \\
+ \lip(G_2') \cdot \left(|h_1(\alpha) - h_2(\alpha)| + \left|(TV_1)(\alpha) - (TV_2)(\alpha)\right|\right) \cdot \left|h_2'(\alpha) + \frac{d(TV_2)}{d\alpha}(\alpha))\right| \label{differ}
\end{gather}

Let us now estimate the three terms in the right hand side separately. Since $G_1$ and $G_2$ are continuous equations $\eqref{thm_3_1}, \eqref{thm_3_2}$ imply that $V_1, V_2$ are continuous. According to lemma 4 there exists $c_1$ such that $|(TV_1)(\alpha) - (TV_2)(\alpha)| \le c_1 \cdot \delta$, $\left|\dfrac{d(TV_1)}{d\alpha}(\alpha) - \dfrac{d(TV_2)}{d\alpha}(\alpha)\right| \le c_1 \cdot \delta$ and $\left|\dfrac{d(TV_2)}{d\alpha}(\alpha)\right| \le c_1 \cdot V_{max}$.

Also, by using lemma 5 we get constant $c_2$ such that $|h_1(\alpha) - h_2(\alpha)| \le c_2 \cdot \delta$, $|h_1'(\alpha) - h_2'(\alpha)| \le c_2 \cdot \delta$ and $|h_2'(\alpha)| \le c_2$. 

And now for $c = c_1 \cdot (2 + \lip(G_2') \cdot (c_1 \cdot V_{max} + c_2 )) + c_2 \cdot (1 + V_{max} + \lip(G_2') \cdot (c_1 \cdot V_{max} + c_2))$ statement of the theorem is clear.

\end{proof}

Now we have estimation for $L^\infty$ norm of function $F_1 - F_2$, which still is not enough for estimating norm of measure $\mu_1 - \mu_1$. To answer main question this section we will state following theorem:

\begin{thm}
Assume that $G_1, G_2 \in C^2(0, V_{max})$, $\|G_1 - G_2\|_{C^2(0, V_{max})} \le \delta$, and moreover $\lip(G_2'') < \infty$. Solutions $V_1, V_2$ of $\eqref{thm_3_1}$, $\eqref{thm_3_2}$ are such that $\supp(V_1), \supp(V_2) \subset [\alpha_{min}, \alpha_{max}]$ (for some $\alpha_{min} > 0$), then there exists $c = c(\alpha_{max}, V_{max}, \lip(G_2''), \alpha_{min}, \kappa)$ such that $|\mu_1 - \mu_2| \le c \cdot \delta$.
\end{thm}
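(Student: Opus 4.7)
\noindent\textit{Proof proposal.} The plan is to derive the required bound on $|\mu_1-\mu_2|$ by first sharpening the preceding proposition to the $C^2$ level, i.e.\ showing
\[ \|V_1^{(k)}-V_2^{(k)}\|_{L^\infty([\alpha_{min},\alpha_{max}])}\le c\,\delta,\qquad k=0,1,2, \]
and then inverting the explicit formula from the proof of Theorem 2: on the support interval $V'(\alpha)=\tfrac{1+\kappa}{\kappa}\alpha F(\alpha)$ with $F(\alpha)=\int_0^\alpha y^{-1}\,d\mu(y)$, so that a $C^2$-bound on $V$ forces both $\mu_i$ to be absolutely continuous on $[\alpha_{min},\alpha_{max}]$ with a density expressible in terms of $V',V''$; a pointwise estimate on the two densities then integrates to a total variation estimate.

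The $k=0$ estimate is Theorem 3, and the $k=1$ estimate is the preceding proposition (whose hypotheses hold because $\|G_1-G_2\|_{C^2}\le\delta$ yields $\|G_1-G_2\|_{C^1}\le\delta$ and $\lip(G_2')\le\|G_2''\|_{L^\infty}<\infty$). For $k=2$, set $u_i:=h_i+TV_i$ and differentiate $V_i=G_i(u_i)$ twice to obtain $V_i''=G_i''(u_i)(u_i')^2+G_i'(u_i)u_i''$. Taking the difference $V_1''-V_2''$ and splitting systematically, one gets three kinds of contributions: (i) factors $G_1^{(j)}-G_2^{(j)}$, $j=0,1,2$, bounded by $\delta$ from the $C^2$-hypothesis; (ii) telescoping terms $G_2^{(j)}(u_1)-G_2^{(j)}(u_2)$, $j=1,2$, controlled by $\lip(G_2'')$ and $\|G_2''\|_{L^\infty}$ times $|u_1-u_2|$, which is $O(\delta)$ from the $k=0$ estimate combined with Lemmas 4--5; (iii) differences $u_1^{(j)}-u_2^{(j)}=(h_1^{(j)}-h_2^{(j)})+(T(V_1-V_2))^{(j)}$, where Lemma 5 gives $\|h_1^{(j)}-h_2^{(j)}\|_\infty\le c_j\delta$ and the boundedness of $T:L^\infty(\alpha_{min},\alpha_{max})\to C^2(\alpha_{min},\alpha_{max})$ from Lemma 4, combined with the $k=0$ bound, gives $\|(T(V_1-V_2))^{(j)}\|_\infty\le c_j\delta$. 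The remaining factors $|u_2^{(j)}|$ and $|G_i^{(j)}(u_i)|$ are uniformly bounded by constants depending on $\alpha_{max}$, $V_{max}$, $\alpha_{min}$, $\kappa$ and $\|G_2\|_{C^2}$; collecting everything yields $\|V_1''-V_2''\|_\infty\le c\,\delta$.

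For the final step, differentiating $V_i'(\alpha)=\tfrac{1+\kappa}{\kappa}\alpha F_i(\alpha)$ on $[\alpha_{min},\alpha_{max}]$ shows $F_i\in C^1$ there and gives the density formula
\[ \frac{d\mu_i}{dy}(y)=y F_i'(y)=\frac{\kappa}{1+\kappa}\Bigl(V_i''(y)-\tfrac{V_i'(y)}{y}\Bigr); \]
both measures vanish outside $[\alpha_{min},\alpha_{max}]$ by the support assumption. Subtracting the densities and integrating,
\[ |\mu_1-\mu_2|=\int_{\alpha_{min}}^{\alpha_{max}}\bigl|yF_1'(y)-yF_2'(y)\bigr|\,dy\le(\alpha_{max}-\alpha_{min})\,\tfrac{\kappa}{1+\kappa}\Bigl(\|V_1''-V_2''\|_\infty+\tfrac{\|V_1'-V_2'\|_\infty}{\alpha_{min}}\Bigr)\le c\,\delta, \]
with $c=c(\alpha_{max},V_{max},\lip(G_2''),\alpha_{min},\kappa)$ of the form required. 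I expect the main obstacle to be the $k=2$ bookkeeping: it is precisely here that the hypothesis $\lip(G_2'')<\infty$ is invoked (for the telescoping term $G_2''(u_1)-G_2''(u_2)$), and one must verify the uniform $L^\infty$-bounds on $(TV_i)^{(j)}$ produced by Lemma 4 carefully. The assumption $\alpha_{min}>0$ is essential both for Lemma 4 to furnish a bounded $T:L^\infty\to C^2$ (the kernel $K$ is only smooth on a triangle bounded away from the origin) and for the factor $1/y$ in the density formula to stay bounded.
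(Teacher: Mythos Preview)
Your proposal is correct and follows essentially the same route as the paper: reduce to proving $\|V_1^{(k)}-V_2^{(k)}\|_{L^\infty}\le c\delta$ for $k=0,1,2$ (the first two coming from Theorem~3 and the Proposition), then invert \eqref{tubes8} to express the density of $\mu_i$ as $\tfrac{\kappa}{1+\kappa}\bigl(V_i''-V_i'/\alpha\bigr)$ and integrate the difference over $[\alpha_{min},\alpha_{max}]$. Your bookkeeping for the $k=2$ step---splitting $V_i''=G_i''(u_i)(u_i')^2+G_i'(u_i)u_i''$ into the three types of contributions and invoking Lemmas~4--5 for the $u$-derivatives---is exactly what the paper does, and your identification of where $\lip(G_2'')$ and $\alpha_{min}>0$ are actually used is accurate.
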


\begin{proof}
Since $G_1, G_2 \in C^2$ equations $\eqref{thm_3_1}$, $\eqref{thm_3_2}$ gives us that $V_1, V_2 \in C^2(0, \alpha_{max})$. According to $\eqref{tubes8}$ we can see that $F_1, F_2 \in C^1(0, \alpha_{max})$. Denoting $f_1(\alpha) = F_1'(\alpha) \cdot \alpha$, $f_2(\alpha) = F_2'(\alpha) \cdot \alpha$ we get $\mu_1 = f_1 \cdot \lambda$, $\mu_2 = f_2 \cdot \lambda$, where $\lambda$ is Lebesgue measure on $[0, \alpha_{max}]$. By differentiating $\eqref{tubes8}$ we get: 

\begin{gather*}
f_1(\alpha) = \frac{1 + \kappa}{\kappa} \cdot \alpha \cdot \left(\frac{V_1'(\alpha)}{\alpha}\right)'\\
f_2(\alpha) = \frac{1 + \kappa}{\kappa} \cdot \alpha \cdot \left(\frac{V_2'(\alpha)}{\alpha}\right)'
\end{gather*}

Subtracting second equation from first we get

\begin{gather*}
f_1(\alpha) - f_2(\alpha) = \frac{1 + \kappa}{\kappa} \cdot \frac{1}{\alpha} \cdot ((V_1'' - V_2'') \cdot \alpha - (V_1' - V_2'))
\end{gather*}

According to the proposition we already know that $\|V_1' - V_2'\|_{L^\infty} \le \tilde c \cdot \delta$ and if we get similar estimation for $\|V_1'' - V_2''\|_{L^\infty}$ we will get constant $c$ such that $\|f_1 - f_2\|_{L^\infty} \le c \cdot \delta$ and hence $|\mu_1 - \mu_2| \le c \cdot \alpha_{max} \cdot \delta$, which will finish the proof. 

To get this estimation we will use the same scheme as in the proof of proposition: we will take second derivative of $\eqref{thm_3_1}$, $\eqref{thm_3_2}$ and subtract second from first:

\begin{gather*}
|V_1''(\alpha) - V_2''(\alpha)| \le \left|G_1''(h_1(\alpha) + (TV_1)(\alpha)) \cdot \left(h_1'(\alpha) + \frac{d(TV_1)}{d\alpha}(\alpha)\right)^2 - \right. \\ 
\left. - G_2''(h_2(\alpha) + (TV_2)(\alpha)) \cdot \left(h_2'(\alpha) + \frac{d(TV_2)}{d\alpha}(\alpha)\right)^2 \right| + \\
+ \left|G_1'(h_1(\alpha) + (TV_1)(\alpha)) \cdot \left(h_1''(\alpha) + \frac{d^2(TV_1)}{d\alpha^2}(\alpha)\right) - \right. \\ 
\left. - G_2'(h_2(\alpha) + (TV_2)(\alpha)) \cdot \left(h_2''(\alpha) + \frac{d^2(TV_2)}{d\alpha^2}(\alpha)\right)\right|
\end{gather*}

The first term can be estimated for the same reasoning as in proposition, but this time constant will depend on $\lip(G_2'')$. For the second term we will use the same method as in preposition, and it is easy to see that in addition to estimations shown in preposition we also need is to get estimations $\|h_1'' - h_2''\|_{L^\infty} \le c \cdot \delta$, $\left\|\dfrac{d^2(TV_1)}{d\alpha^2}(\alpha) - \dfrac{d^2(TV_2)}{d\alpha^2}(\alpha)\right\|_{L^\infty} \le c \cdot \delta$, $\|h_2''\|_{L^\infty} \le c$ and $\left\|\dfrac{d^2(TV_2)}{d\alpha^2}(\alpha)\right\|_{L^\infty} \le c$ for some $c = c(\alpha_{max}, V_{max}, \lip(G_2''), \alpha_{min}, \kappa)$, which directly follows from lemma 4 and lemma 5.

%\begin{gather*}
%\dfrac{d^2(TV)}{d\alpha^2}(\alpha) = 4\kappa \cdot (1 - \kappa^2) \cdot \left( \alpha^3 \cdot \int\limits_{\alpha}^{\alpha_{max}} \frac{V(y)}{y^2 \cdot (y^2 - (1 - \kappa^2) \cdot \alpha^2)^{\frac{3}{2}}}\ dy \right)' + \\ 
%+ 3\kappa \cdot (1 - \kappa^2)^2 \cdot \left( \alpha^5 \cdot \int\limits_{\alpha}^{\alpha_{max}} \frac{V(y)}{y^2 \cdot (y^2 - (1 - \kappa^2) \cdot \alpha^2)^{\frac{5}{2}}}\ dy - \frac{V(\alpha)}{\kappa^5 \cdot \alpha^2}\right)' = \\
%= 8\kappa \cdot (1 - \kappa^2) \cdot \alpha^2 \cdot \int\limits_{\alpha}^{\alpha_{max}} \frac{V(y)}{y^2 \cdot (y^2 - (1 - \kappa^2) \cdot \alpha^2)^{\frac{3}{2}}}\ dy + \\
%+ 12\kappa \cdot (1 - \kappa^2)^2 \cdot \alpha^4 \cdot \left(\int\limits_{\alpha}^{\alpha_{max}} \frac{V(y)}{y^2 \cdot (y^2 - (1 - \kappa^2) \cdot \alpha^2)^{\frac{5}{2}}}\ dy - \frac{V(\alpha)}{\alpha^2 \cdot (\alpha^2 - (1 - \kappa^2) \cdot \alpha^2)^{\frac{5}{2}}}\right) + \\
%+ 15\kappa \cdot (1 - \kappa^2)^2 \cdot \alpha^4 \cdot \int\limits_{\alpha}^{\alpha_{max}} \frac{V(y)}{y^2 \cdot (y^2 - (1 - \kappa^2) \cdot \alpha^2)^{\frac{5}{2}}}\ dy 
%\end{gather*}

\end{proof}

Note that all constants mentioned above can be calculated explicitly.

\section{Numerical simulation}

In previous section we have showed that procedure of reestablishing the measure given a curve is stable in the mathematical sense, but in the context of petroleum engineering we are interested in the explicit constant in estimations given above. In other words, we would like to estimate the ratio of difference of measures to difference of derivatives of functions $G_1$ and $G_2$. To be more precise we will calculate following:

\begin{gather*}
c = \left\|\frac{G_1' - G_2'}{G_1' + G_2'} \right \|_{L^1(0, V_{max})}: \left\| \frac{F_1 - F_2}{F_1 + F_2}\right\|_{L^1(0, \alpha_{max})}
\end{gather*}
,where $F_j (\alpha) = \mu_j(0, \alpha)$.

To do this we have performed a numerical simulation an calculated constant described above for some pairs of measures $\mu_1$, $\mu_2$. Here is a brief description of algorithm we have used.

We generate these measures as a number of random delta-measures on section $\left[2.5, 10\right)$ taking $\alpha_{max} = 10$. Then we calculate $V_{j, max} = V_{j, o}(\alpha_{max}) + V_{j, w}(\alpha_{max})$, and if $|V_{1, max} - V_{2, max}| < \dfrac{V_{1, max}}{10}$ we simply calculate $G_1$ and $G_2$ according to formulae given in section 4. In following calculations it is important that $G_1'$ and $G_2'$ can be found given only $V_{w, 1}'$, $V_{o, 1}'$ and $V_{w, 2}'$, $V_{o, 2}'$ respectively. Also it is easy to see that these derivatives can be calculated as integrals $\int g(\alpha, y) d\mu_j(y)$ for appropriate functions $g$ and limits of integration. 

On figure \ref{picture_1} results of numerical simulation are represented. For measures shown constant $c$ reaches value of 5 and higher.

\begin{figure}[ht]
\centering
\includegraphics[width=15 cm]{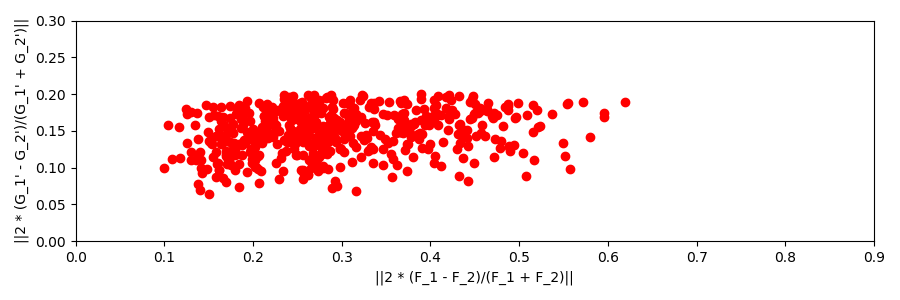}

\caption{Numerical simulation results}
\label{picture_1}
\end{figure}

\end{document}